\providecommand{\U}[1]{\protect \rule{.1in}{.1in}}
\newtheorem{theorem}{Theorem}[section]
\newtheorem{definition}[theorem]{Definition}
\newtheorem{example}[theorem]{Example}
\newtheorem{lemma}[theorem]{Lemma}
\newtheorem{proposition}[theorem]{Proposition}
\newtheorem{remark}[theorem]{Remark}
\newenvironment{proof}[1][Proof]{\noindent \textbf{#1.} }{\  $\Box$}
\numberwithin{equation}{section}
\begin{document}

\title{Maximum principle for discrete-time stochastic optimal control problem under
distribution uncertainty}
\author{Mingshang Hu \thanks{Zhongtai Securities Institute for Financial Studies,
Shandong University, Jinan, Shandong 250100, PR China. humingshang@sdu.edu.cn.
Research supported by National Key R\&D Program of China (No. 2018YFA0703900)
and NSF (No. 11671231). }
\and Shaolin Ji\thanks{Zhongtai Securities Institute for Financial Studies,
Shandong University, Jinan, Shandong 250100, PR China. jsl@sdu.edu.cn.
Research supported by NSF (No. 11971263 and 11871458). }
\and Xiaojuan Li\thanks{Zhongtai Securities Institute for Financial Studies,
Shandong University, Jinan 250100, China. Email: lixiaojuan@mail.sdu.edu.cn.} }
\maketitle

\textbf{Abstract}. In this paper, we study a discrete-time stochastic optimal
control problem under distribution uncertainty with convex control domain. By
weak convergence method and Sion's minimax theorem, we obtain the variational
inequality for cost functional under a reference probability $P^{\ast}$.
Moreover, under the square integrability condition for noise and control, we
establish the discrete-time stochastic maximum principle under $P^{\ast}$.
Finally, we introduce a backward algorithm to calculate the reference
probability $P^{\ast}$ and the optimal control $u^{\ast}$.

{\textbf{Key words}. } Stochastic maximum principle, Stochastic optimal
control, Robust control, Sublinear expectation, Volatility uncertainty

\textbf{AMS subject classifications.} 93E20, 60H10, 35K15

\addcontentsline{toc}{section}{\hspace*{1.8em}Abstract}

\section{Introduction}

The stochastic maximum principle is an important tool to solve stochastic
optimal control problems. There are many results on this topic for different
kinds of\ continuous-time stochastic optimal control problems (see \cite{BLM,
YingHu006, YingHu18, Hu17, Hu-JX, Hu-JXu, HP96, LZ, P-90, QT, T, Wu, Yong,
J.Yong} and the reference therein) and discrete-time stochastic optimal
control problems (see \cite{LinZ, WuZ} and the reference therein). In general,
the stochastic optimal control problem is formulated under a given probability
space. But many ecomomic and financial problems involve volatility uncertainty
(see \cite{ALP, DenisMartini2006, EJ-2, EJ-1, L, Peng2004}). In this case, the
stochastic optimal control problem can not be formulated under a given
probability space, because the volatility uncertainty is characterized by a
family of non-dominated probability measures $\mathcal{P}$.

Recently, Hu and Ji \cite{HJ0} studied the stochastic recursive optimal
control problem under volatility uncertainty by using the theory of
$G$-expectation, which was introduced by Peng in \cite{Peng2005, P07a, P08a},
and obtained the related stochastic maximum principle under a reference
probability $P^{\ast}\in \mathcal{P}$.

In this paper, we study the following discrete-time stochastic control system
with convex control domain:%
\[
\left \{
\begin{array}
[c]{rl}%
X_{k+1}= & \displaystyle b\left(  k,X_{k},u_{k}\right)  +\sum_{l=1}^{d}%
\sigma^{l}\left(  k,X_{k},u_{k}\right)  W_{k+1}^{l},\\
X_{0}= & x_{0}\in \mathbb{R}^{n},\text{ }k=0,\ldots,N-1,
\end{array}
\right.
\]
where $N$ is a given and $W_{k}=(W_{k}^{1},\ldots,W_{k}^{d})^{T}$,
$k=1,\ldots,N$, are noises. This kind of control system can be regarded as a
discretization of the control system in \cite{HJ0}, but the difference is that
the noise distribution may be more flexible and not limited to the $G$-normal
distribution. Specifically, as long as a family of probability measures
$\{F_{\theta}:\theta \in \Theta \}$ is given, which characterizes the uncertainty
of the distribution of $W_{k}$, we can construct a sublinear expectation space
$(\Omega,L_{c}^{1}(\mathcal{F}_{N}),\mathbb{\hat{E}})$ (see Preliminaries for
details) to study the above discrete-time stochastic control system. Our
discrete-time stochastic optimal control problem is to minimize the following
cost functional%
\[
J\left(  u\right)  :=\mathbb{\hat{E}}\left[  \sum_{k=0}^{N-1}f\left(
k,X_{k},u_{k}\right)  +\varphi \left(  X_{N}\right)  \right]
\]
over all admissible controls. Due to the representation theorem of sublinear
expectation, $\mathbb{\hat{E}}[\cdot]$ can be represented as an upper
expectation over a family of probability measures $\mathcal{P}$ on
$(\Omega,\mathcal{F}_{N})$. Thus, our discrete-time stochastic optimal control
problem is the robust optimal control problem.

As pointed out in \cite{WuZ}, the integrability of the solution to the adjoint
equation in discrete-time stochastic optimal control problem is completely
different from that in the continuous-time case. In order to make sense of the
adjoint equation to obtain the maximum principle, the integrability of noise
and control is required to depend on $N$ in the literature. However, according
to the actual situation, we expect the integrability requirements of
discrete-time stochastic control system\ to be the same as that of the
continuous-time case. So one purpose of this paper is to make the
integrability of noise and control independent of $N$. On the other hand, the
maximum principle obtained in \cite{HJ0} contains a reference probability
$P^{\ast}$, and there is no general calculation method for this $P^{\ast}$ at
present. So the other purpose of this paper is to give $P^{\ast}$ a better
explanation in discrete-time stochastic optimal control problem, and to give a
method for calculating $P^{\ast}$.

By using the weak convergence method introduced in \cite{HJ0}, we get the
variational equation for cost functional. Furthermore, we obtain the
variational inequality on a reference probability $P^{\ast}$ by Sion's minimax
theorem. In order to derive the maximum principle, we need to consider the
adjoint equation under $P^{\ast}$. Under the square integrability condition
for noise and control, i.e. $|W_{k}|+|u_{k}|\in L_{c}^{2}(\mathcal{F}_{k})$,
we obtain the integrability of the solution to the adjoint equation in Lemma
\ref{le3-5}, which is new in the literature, and then obtain the maximum
principle under $P^{\ast}$. Moreover, we prove that the obtained maximum
principle under $P^{\ast}$ is also a sufficient condition under some convex
assumptions. It is important to emphasize that $P^{\ast}$ is part of the
maximum principle. Thus, the key point to apply maximum principle is to find
$P^{\ast}$. For this purpose, we introduce a backward algorithm to calculate
the reference probability $P^{\ast}$ and the optimal control $u^{\ast}$.

The paper is organized as follows. In Section 2, we give the sublinear
expectation framework of noise distribution and recall some basic results. The
discrete-time stochastic optimal control problem under distribution
uncertainty is formulated in Section 3. In Section 4, we derive the related
discrete-time stochastic maximum principle. The backward algorithm and
examples to apply the obtained maximum principle are given in Section 5.

\section{Preliminaries}

We recall some basic results of sublinear expectations. The readers may refer
to Peng's book \cite{P2019} for more details.

Let $\Omega$ be a given sample space. Throughout the paper, we suppose that
$N$ is a given positive integer. Let $W_{k}=(W_{k}^{1},\ldots,W_{k}^{d}%
)^{T}:\Omega \rightarrow \mathbb{R}^{d}$, $k=1,\ldots,N$, be given functions on
$\Omega$. Consider the following spaces of random variables:%
\[
Lip\left(  \mathcal{F}_{k}\right)  =\{ \phi \left(  W_{1},\cdots,W_{k}\right)
:\forall \phi \in C_{b,Lip}(\mathbb{R}^{d\times k})\},\text{ }k=1,\ldots,N,
\]
where $C_{b,Lip}(\mathbb{R}^{d\times k})$ denotes the space of bounded
Lipschitz functions on $\mathbb{R}^{d\times k}$, $\mathcal{F}_{k}%
=\sigma \{W_{1},\cdots,W_{k}\}$ and $\mathcal{F}_{0}=\{ \emptyset,\Omega \}$. We
now construct a sublinear expectation $\mathbb{\hat{E}}:Lip\left(
\mathcal{F}_{N}\right)  \rightarrow \mathbb{R}$ in the following two steps such
that $W_{k}$, $k=1,\ldots,N$, are independent identically distributed.

Step 1. Denote a family of probability measures on $(\mathbb{R}^{d}%
,\mathcal{B}(\mathbb{R}^{d}))$ by $\{F_{\theta}:\theta \in \Theta \}$, which
characterizes the uncertainty of the distribution of $W_{k}$, $k=1,\ldots,N$.
For each $\varphi \in C_{b,Lip}(\mathbb{R}^{d})$, define%
\[
\mathbb{\hat{E}}\left[  \varphi \left(  W_{k}\right)  \right]  =\sup_{\theta
\in \Theta}\int_{\mathbb{R}^{d}}\varphi(x)F_{\theta}(dx)\text{, }k=1,\ldots,N.
\]

Step 2. For each $X=\phi \left(  W_{1},\cdots,W_{N}\right)  \in Lip\left(
\mathcal{F}_{N}\right)  $, define%
\[
\mathbb{\hat{E}}\left[  X\right]  =\phi_{0},
\]
where $\phi_{0}$ is obtained via the following procedure:%
\[%
\begin{array}
[c]{rcl}%
\phi_{N-1}(x_{1},\ldots,x_{N-1}) & = & \mathbb{\hat{E}}\left[  \phi \left(
x_{1},\ldots,x_{N-1},W_{N}\right)  \right]  ,\\
\phi_{N-2}(x_{1},\ldots,x_{N-2}) & = & \mathbb{\hat{E}}\left[  \phi
_{N-1}\left(  x_{1},\ldots,x_{N-2},W_{N-1}\right)  \right]  ,\\
& \vdots & \\
\phi_{1}(x_{1}) & = & \mathbb{\hat{E}}\left[  \phi_{2}\left(  x_{1}%
,W_{2}\right)  \right]  ,\\
\phi_{0} & = & \mathbb{\hat{E}}\left[  \phi_{1}\left(  W_{1}\right)  \right]
.
\end{array}
\]
The corresponding sublinear expectation $\mathbb{\hat{E}}\left[
X|\mathcal{F}_{k}\right]  :=\phi_{k}(W_{1},\cdots,W_{k})$ for $k=0,\ldots,N$
with $\phi_{N}=\phi$ and $\mathbb{\hat{E}}\left[  X|\mathcal{F}_{0}\right]
=\phi_{0}$.

Denote $W=(W_{1},\ldots,W_{N})$. In order to prove $\mathbb{\hat{E}}\left[
\phi \left(  W\right)  \right]  =\mathbb{\hat{E}}\left[  \tilde{\phi}\left(
W\right)  \right]  $ for $\phi \left(  W\right)  =\tilde{\phi}\left(  W\right)
$ with $\phi$, $\tilde{\phi}\in C_{b,Lip}(\mathbb{R}^{d\times N})$, we need
the following assumption:

\begin{description}
\item[(A1)] $\{W(\omega):\omega \in \Omega \}=A_{1}\times \cdots \times A_{N}$ and
$F_{\theta}\left(  \bar{A}_{k}\right)  =1$ for $\theta \in \Theta$,
$k=1,\ldots,N$, where $A_{k}=\{W_{k}(\omega):\omega \in \Omega \}$, $\bar{A}_{k}$
is the closure of $A_{k}$ for $k=1,\ldots,N$.
\end{description}

Under this assumption, it is easy to check that $\mathbb{\hat{E}}:Lip\left(
\mathcal{F}_{N}\right)  \rightarrow \mathbb{R}$ is well defined. To obtain the
representation of the distribution of $W$, we need the following assumption:

\begin{description}
\item[(A2)] $\sup_{\theta \in \Theta}\int_{\mathbb{R}^{d}}|x|F_{\theta
}(dx)<\infty.$
\end{description}

Set%
\[%
\begin{array}
[c]{rl}%
\mathcal{A}= & \left \{  Q:Q\text{ is a probability measure on }(\mathbb{R}%
^{d\times N},\mathcal{B}(\mathbb{R}^{d\times N}))\text{ such that}\right.
\text{ }\\
& \left.  E_{Q}[\phi]\leq \mathbb{\hat{E}}\left[  \phi \left(  W_{1}%
,\cdots,W_{N}\right)  \right]  \text{ for each }\phi \in C_{b,Lip}%
(\mathbb{R}^{d\times N})\right \}  .
\end{array}
\]
Under the assumptions (A1) and (A2), by Lemma 1.3.5 in \cite{P2019} (see also
Theorem 10 in \cite{HL}), we know that $\mathcal{A}$ is convex, weakly compact
and%
\begin{equation}
\mathbb{\hat{E}}\left[  \phi \left(  W_{1},\cdots,W_{N}\right)  \right]
=\max_{Q\in \mathcal{A}}E_{Q}[\phi]\text{ for each }\phi \in C_{b,Lip}%
(\mathbb{R}^{d\times N}). \label{e-1-1}%
\end{equation}
Here $\mathcal{A}$ characterizes the uncertainty of the distribution of $W$.
In order to give the representation of $\mathbb{\hat{E}}[\cdot]$ on
$(\Omega,\mathcal{F}_{N})$, we need the following assumption:

\begin{description}
\item[(A3)] For each $Q\in \mathcal{A}$, there exists a probability measure $P$
on $(\Omega,\mathcal{F}_{N})$ such that $Q=P\circ W^{-1}$, where
$W=(W_{1},\ldots,W_{N})$.
\end{description}

If $\mathbb{\hat{E}}[\cdot]=E_{P}[\cdot]$ and $W_{k}$, $k=1,\ldots,N$, are
independent under probability measure $P$, then the assumption (A3) holds by
taking $\{F_{\theta}:\theta \in \Theta \}=\{P\circ W_{1}^{-1}\}$. If
$\mathbb{\hat{E}}[\cdot]$ is not a linear expectation, we have the following
sufficient condition for the assumption (A3) to be true.

\begin{lemma}
If the the assumption (A1) holds and $\{W(\omega):\omega \in \Omega \}$ is a
closed set in $\mathbb{R}^{d\times N}$, then the assumption (A3) holds.
\end{lemma}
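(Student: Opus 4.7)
The plan splits into two steps: first show that every $Q\in\mathcal{A}$ is concentrated on the closed set $K:=W(\Omega)=A_{1}\times\cdots\times A_{N}$, then push $Q$ back through the surjection $W:\Omega\to K$ to produce the desired $P$ on $(\Omega,\mathcal{F}_{N})$. As a preliminary observation, since the product $K$ is closed and nonempty, each factor $A_{k}$ is itself closed (pick any limit point of $A_{k}$, pair it with fixed points chosen from the other $A_{j}$, and invoke closedness of $K$), so in fact $A_{k}=\bar{A}_{k}$. Hence by (A1) every $F_{\theta}$ is supported on $A_{k}$, a fact I will use crucially below.

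For the first step I would take the bounded Lipschitz functions
\[
\phi_{n}(x):=\min\bigl(1,\,n\cdot\mathrm{dist}(x,K)\bigr)\in C_{b,Lip}(\mathbb{R}^{d\times N}),
\]
which vanish on $K$ and increase pointwise to $\mathbf{1}_{K^{c}}$. A downward induction along the recursion that defines $\mathbb{\hat{E}}$ shows that each iterate $\phi_{j}$ vanishes on $A_{1}\times\cdots\times A_{j}$: whenever $(x_{1},\ldots,x_{j-1})\in A_{1}\times\cdots\times A_{j-1}$, the support of $F_{\theta}$ lies in $A_{j}$, so $\phi_{j}(x_{1},\ldots,x_{j-1},y)=0$ for every $y$ in that support, and hence $\phi_{j-1}(x_{1},\ldots,x_{j-1})=0$. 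At $j=0$ this yields $\mathbb{\hat{E}}[\phi_{n}(W)]=\phi_{0}=0$, and the defining inequality of $\mathcal{A}$ then forces $E_{Q}[\phi_{n}]=0$ for every $Q\in\mathcal{A}$. Monotone convergence as $n\to\infty$ gives $Q(K^{c})=0$.

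For the second step, observe that $\mathcal{F}_{N}=\sigma(W)$, so every $A\in\mathcal{F}_{N}$ has the form $A=W^{-1}(B)$ for some $B\in\mathcal{B}(\mathbb{R}^{d\times N})$; I would set $P(A):=Q(B)$. Well-definedness follows because if $W^{-1}(B_{1})=W^{-1}(B_{2})$ then $B_{1}\cap K=B_{2}\cap K$ (by surjectivity of $W$ onto $K$), and the concentration $Q(K^{c})=0$ then forces $Q(B_{1})=Q(B_{2})$. Nonnegativity, countable additivity and $P(\Omega)=1$ are inherited from $Q$, and $Q=P\circ W^{-1}$ holds by construction.

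The step I expect to be the main technical hurdle is the inductive vanishing $\phi_{j}\equiv 0$ on $A_{1}\times\cdots\times A_{j}$: outside this product set the $\phi_{j}$ need not vanish, so the induction has to track precisely which arguments are constrained to lie in the relevant factors, and closedness of $K$ enters exactly at the point where one needs the supports of the $F_{\theta}$ to sit inside $A_{k}$ rather than merely inside $\bar{A}_{k}$. Once that is in hand, the remainder of the argument is essentially measure-theoretic bookkeeping.
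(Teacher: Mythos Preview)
Your proof is correct and follows the same two-step skeleton as the paper: first show every $Q\in\mathcal{A}$ is concentrated on $K=W(\Omega)$, then define $P(W^{-1}(B)):=Q(B)$ and verify well-definedness via $Q(K^{c})=0$. The only real difference lies in how concentration on $K$ is established. You approximate $\mathbf{1}_{K^{c}}$ by a single Lipschitz function $\phi_{n}$ on $\mathbb{R}^{d\times N}$ and then run a backward induction through the iterates defining $\mathbb{\hat{E}}$ to conclude $\mathbb{\hat{E}}[\phi_{n}(W)]=0$. The paper instead works coordinate by coordinate: for each $k$ it approximates $\mathbf{1}_{A_{k}^{c}}$ by functions $\varphi_{i}^{k}\in C_{b,Lip}(\mathbb{R}^{d})$; since $\varphi_{i}^{k}(W_{k})$ depends on $W_{k}$ alone, the backward recursion collapses immediately to $\mathbb{\hat{E}}[\varphi_{i}^{k}(W_{k})]=\sup_{\theta}\int\varphi_{i}^{k}\,dF_{\theta}=0$, and monotone convergence gives $Q(\{x_{k}\notin A_{k}\})=0$ for each $k$, hence $Q(K^{c})=0$. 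The coordinate-wise route buys the paper a one-line computation in place of your induction, while your route handles the whole product in one shot and uses an explicit distance-function approximant rather than invoking Tietze. Both are equally valid; the induction you identify as the main hurdle is indeed routine once the closedness of each $A_{k}$ (which you correctly deduce) ensures $F_{\theta}(A_{k})=1$.
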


\begin{proof}
Since $\{W(\omega):\omega \in \Omega \}$ is closed, we know that $A_{k}$,
$k=1,\ldots,N$, in the assumption (A1) are closed. By Tietze's extension
theorem, there exists a sequence $\{ \varphi_{i}^{k}:i\geq1\} \subset
C_{b,Lip}(\mathbb{R}^{d})$ such that $0\leq \varphi_{i}^{k}\leq1$ and $\phi
_{i}^{k}\uparrow I_{A_{k}^{c}}$ for $k=1,\ldots,N$. It is easy to check that
$\mathbb{\hat{E}}\left[  \phi_{i}^{k}\left(  W_{k}\right)  \right]  =0$ for
$i\geq1$ and $k\leq N$. Thus, for each $Q\in \mathcal{A}$, we deduce by
(\ref{e-1-1}) that $Q(\{W(\omega):\omega \in \Omega \})=1$. Define $P$ on
$\mathcal{F}_{N}=W^{-1}(\mathcal{B}(\mathbb{R}^{d\times N}))$ as follows%
\[
P(W^{-1}(A)):=Q(A)=Q(A\cap \{W(\omega):\omega \in \Omega \})\text{ for }%
A\in \mathcal{B}(\mathbb{R}^{d\times N}).
\]
It is easy to verify that $P$ is well defined and $Q=P\circ W^{-1}$, which
implies that the assumption (A3) holds.
\end{proof}

Set%
\[%
\begin{array}
[c]{rl}%
\mathcal{P}= & \left \{  P:P\text{ is a probability measure on }(\Omega
,\mathcal{F}_{N})\text{ such that}\right.  \text{ }\\
& \left.  E_{P}[\phi \left(  W_{1},\cdots,W_{N}\right)  ]\leq \mathbb{\hat{E}%
}\left[  \phi \left(  W_{1},\cdots,W_{N}\right)  \right]  \text{ for each }%
\phi \in C_{b,Lip}(\mathbb{R}^{d\times N})\right \}  .
\end{array}
\]
By the assumption (A3) and (\ref{e-1-1}), we know that
\begin{equation}
\mathcal{A}=\{P\circ W^{-1}:P\in \mathcal{P}\} \label{e-1-2}%
\end{equation}
and%
\begin{equation}
\mathbb{\hat{E}}\left[  \phi \left(  W_{1},\cdots,W_{N}\right)  \right]
=\max_{P\in \mathcal{P}}E_{P}[\phi \left(  W_{1},\cdots,W_{N}\right)  ]\text{
for each }\phi \in C_{b,Lip}(\mathbb{R}^{d\times N}). \label{e-1-3}%
\end{equation}

In the following of this paper, we always suppose that the assumptions (A1),
(A2) and (A3) hold, which implies that (\ref{e-1-1}), (\ref{e-1-2}) and
(\ref{e-1-3}) hold. The capacity associated to $\mathcal{P}$ is defined as
follows%
\[
c(B):=\sup_{P\in \mathcal{P}}P(B)\text{ for }B\in \mathcal{F}_{N}.
\]
A set $B\in \mathcal{F}_{N}$ is polar if $c(B)=0$. A property holds
quasi-surely (q.s. for short) if it holds outside a polar set. In the
following, we do not distinguish two random variables $X$ and $Y$ if $X=Y$
q.s. Set%
\[
\mathbb{L}^{p}(\mathcal{F}_{k}):=\left \{  X\in \mathcal{F}_{k}:\sup
_{P\in \mathcal{P}}E_{P}[|X|^{p}]<\infty \right \}  \text{ for }p\geq1\text{,
}k=1,\ldots,N,
\]
we extend $\mathbb{\hat{E}}[\cdot]$ to $\mathbb{L}^{1}(\mathcal{F}_{N})$
(still denote it by $\mathbb{\hat{E}}[\cdot]$) as follows%
\[
\mathbb{\hat{E}}\left[  X\right]  :=\sup_{P\in \mathcal{P}}E_{P}[X]\text{ for
}X\in \mathbb{L}^{1}(\mathcal{F}_{N}).
\]
By Proposition 14 in \cite{DHP11}, we know that $\mathbb{L}^{p}(\mathcal{F}%
_{k})$ is a Banach space under the norm $||X||_{p}:=(\mathbb{\hat{E}}\left[
|X|^{p}\right]  )^{1/p}$. We denote by $L_{c}^{p}(\mathcal{F}_{k})$ the
completion of $Lip\left(  \mathcal{F}_{k}\right)  $ under the norm
$||X||_{p}:=(\mathbb{\hat{E}}\left[  |X|^{p}\right]  )^{1/p}$ for $p\geq1$ and
$k=1,\ldots,N$. It is clear that $L_{c}^{p}(\mathcal{F}_{k})$ is a closed
subset of $\mathbb{L}^{p}(\mathcal{F}_{k})$. Note that, for $k=1,\ldots,N$ and
$X$, $Y\in Lip\left(  \mathcal{F}_{N}\right)  $,
\[
\mathbb{\hat{E}}\left[  \left \vert \mathbb{\hat{E}}\left[  X|\mathcal{F}%
_{k}\right]  -\mathbb{\hat{E}}\left[  Y|\mathcal{F}_{k}\right]  \right \vert
\right]  \leq \mathbb{\hat{E}}\left[  \left \vert X-Y\right \vert \right]  ,
\]
then $\mathbb{\hat{E}}\left[  \cdot|\mathcal{F}_{k}\right]  $ can be
continuously extended as a mapping $\mathbb{\hat{E}}\left[  \cdot
|\mathcal{F}_{k}\right]  :L_{c}^{1}(\mathcal{F}_{N})\rightarrow L_{c}%
^{1}(\mathcal{F}_{k})$. Similarly, for $k=1,\ldots,N$, we define%
\[
\mathbb{L}^{p}(\mathcal{\tilde{F}}_{k}):=\left \{  \phi \in \mathcal{\tilde{F}%
}_{k}:\sup_{Q\in \mathcal{A}}E_{Q}[|\phi|^{p}]<\infty \right \}  \text{ for
}p\geq1\text{,}%
\]
and denote by $L_{c}^{p}(\mathcal{\tilde{F}}_{k})$ the completion of
$Lip\left(  \mathcal{\tilde{F}}_{k}\right)  =\{ \phi \in C_{b,Lip}%
(\mathbb{R}^{d\times N}):\phi \in \mathcal{\tilde{F}}_{k}\}$ under the norm
$||\phi||_{p}:=(\sup_{Q\in \mathcal{A}}E_{Q}[|\phi|^{p}])^{1/p}$ for $p\geq1$,
where $\mathcal{\tilde{F}}_{k}=\{A\times \mathbb{R}^{d\times(N-k)}%
:A\in \mathcal{B}(\mathbb{R}^{d\times k})\}$. It is well known that for each
$X\in \mathcal{F}_{k}$, there exists a $\phi \in \mathcal{\tilde{F}}_{k}$ such
that $X=\phi(W_{1},\ldots,W_{k})$. Thus, for $p\geq1$, $k=1,\ldots,N$,%
\begin{equation}%
\begin{array}
[c]{rl}%
\mathbb{L}^{p}(\mathcal{F}_{k})= & \left \{  \phi(W_{1},\ldots,W_{k}):\phi
\in \mathbb{L}^{p}(\mathcal{\tilde{F}}_{k})\right \}  ,\\
L_{c}^{p}(\mathcal{F}_{k})= & \left \{  \phi(W_{1},\ldots,W_{k}):\phi \in
L_{c}^{p}(\mathcal{\tilde{F}}_{k})\right \}  .
\end{array}
\label{e-1-4}%
\end{equation}

The following results can be found in Proposition 19 and Lemma 29 in
\cite{DHP11}.

\begin{proposition}
\label{pro1-1}We have

\begin{description}
\item[(i)] Let $X\in L_{c}^{p}(\mathcal{F}_{N})$ be given. Then $\mathbb{\hat
{E}}\left[  |X|^{p}I_{\{|X|\geq i\}}\right]  \downarrow0$ as $i\rightarrow
\infty$.

\item[(ii)] Let $X\in L_{c}^{1}(\mathcal{F}_{N})$ be given. Then for each
$\varepsilon>0$, there exists a $\delta>0$ such that $\mathbb{\hat{E}}\left[
|X|I_{B}\right]  \leq \varepsilon$ for any $B\in \mathcal{F}_{N}$ with
$c(B)\leq \delta$.

\item[(iii)] Let $\{Q^{i}:i\geq1\} \subset \mathcal{A}$ converge weakly to
$Q\in \mathcal{A}$. Then for each $\phi \in L_{c}^{1}(\mathcal{\tilde{F}}_{N})$,
we have $E_{Q^{i}}[\phi]\rightarrow E_{Q}[\phi]$ as $i\rightarrow \infty$.
\end{description}
\end{proposition}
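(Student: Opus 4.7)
The plan is to prove all three items by the same basic device: a generic $X\in L_c^p(\mathcal{F}_N)$ (resp.\ $\phi\in L_c^1(\mathcal{\tilde{F}}_N)$) is, by definition, a norm limit of bounded Lipschitz cylindrical functionals of $(W_1,\ldots,W_N)$, and such approximants are uniformly bounded pointwise. The three claims then reduce to soft estimates that are routine for bounded approximants, plus a density argument. I would treat (i) and (ii) together by truncation-plus-Chebyshev, and (iii) by a standard $\varepsilon/3$ split exploiting the common dominating set $\mathcal{A}$.

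For (i), I would pick $X_m\in Lip(\mathcal{F}_N)$ with $\mathbb{\hat{E}}[|X-X_m|^p]\to 0$. Since each $X_m$ is bounded by some $M_m$, we have $\mathbb{\hat{E}}[|X_m|^p I_{\{|X|\ge i\}}]\le M_m^p c(\{|X|\ge i\})$, and $c(\{|X|\ge i\})\le i^{-p}\mathbb{\hat{E}}[|X|^p]\to 0$ by a Chebyshev-type bound. Combining this with the elementary inequality $|X|^p\le 2^{p-1}(|X-X_m|^p+|X_m|^p)$ and letting first $i\to\infty$ and then $m\to\infty$ gives $\limsup_i \mathbb{\hat{E}}[|X|^p I_{\{|X|\ge i\}}]\le 2^{p-1}\mathbb{\hat{E}}[|X-X_m|^p]\to 0$; monotonicity in $i$ is immediate. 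For (ii) the same approximation yields $\mathbb{\hat{E}}[|X|I_B]\le \mathbb{\hat{E}}[|X-X_m|]+M_m c(B)$; given $\varepsilon>0$, choose $m$ so that $\mathbb{\hat{E}}[|X-X_m|]\le \varepsilon/2$ and then take $\delta=\varepsilon/(2M_m)$.

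For (iii), when $\phi\in C_{b,Lip}(\mathbb{R}^{d\times N})$ the conclusion $E_{Q^i}[\phi]\to E_Q[\phi]$ is just the definition of weak convergence of probability measures on $\mathbb{R}^{d\times N}$. For a general $\phi\in L_c^1(\mathcal{\tilde{F}}_N)$, I would approximate by $\phi_m\in C_{b,Lip}(\mathbb{R}^{d\times N})$ with $\sup_{Q\in\mathcal{A}}E_Q[|\phi-\phi_m|]\to 0$ (this is the defining norm of $L_c^1(\mathcal{\tilde{F}}_N)$) and use the split
\[
|E_{Q^i}[\phi]-E_Q[\phi]|\le |E_{Q^i}[\phi-\phi_m]|+|E_{Q^i}[\phi_m]-E_Q[\phi_m]|+|E_Q[\phi_m-\phi]|.
\]
The first and third terms are dominated uniformly in $i$ by $\sup_{Q\in\mathcal{A}}E_Q[|\phi-\phi_m|]$, which is arbitrarily small by the choice of $m$, and the middle term vanishes as $i\to\infty$ for each fixed $m$ by the bounded-continuous case; a standard $\varepsilon/3$ argument finishes.

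The one point that needs care is that the uniform bound $\sup_{Q\in\mathcal{A}}E_Q[|\phi-\phi_m|]$ used in (iii) is legitimate precisely because every $Q^i$ and the limit $Q$ belong to the single weakly compact set $\mathcal{A}$ (which is guaranteed here by (A1)--(A3) and the representation \eqref{e-1-1}); without this, the individual measures $Q^i$ could place too much mass on the set where $\phi$ is large, and the approximation step would collapse. Beyond that, the whole proposition is essentially a density-and-approximation exercise, and the only modest obstacle is bookkeeping the different norms ($\|\cdot\|_p$ on $L_c^p(\mathcal{F}_k)$ versus the $\sup_Q E_Q[|\cdot|^p]^{1/p}$ norm on $L_c^p(\mathcal{\tilde{F}}_k)$) while moving between $X=\phi(W_1,\ldots,W_N)$ and $\phi$ via \eqref{e-1-4}.
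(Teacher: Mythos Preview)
Your proof is correct in all three parts; the density-and-approximation arguments you sketch are exactly the standard ones, and the one genuinely delicate point---that the uniform bound $\sup_{Q\in\mathcal{A}}E_Q[|\phi-\phi_m|]$ controls both $|E_{Q^i}[\phi-\phi_m]|$ and $|E_Q[\phi-\phi_m]|$ because $Q^i,Q\in\mathcal{A}$---you have identified and handled properly.

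The paper, however, does not prove this proposition at all: it merely states the result and refers the reader to Proposition~19 and Lemma~29 of \cite{DHP11}. So there is no ``paper's own proof'' to compare against; your argument is precisely the kind of self-contained proof one would expect to find behind that citation, and nothing more needs to be said.
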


\begin{remark}
By (ii) and the assumption (A3), we can easily deduce that for each $\phi \in
L_{c}^{1}(\mathcal{\tilde{F}}_{N})$ (resp. $X\in L_{c}^{1}(\mathcal{F}_{N})$),
there exists a $Q^{\ast}\in \mathcal{A}$ (resp. $P^{\ast}\in \mathcal{P}$)
satisfying $E_{Q^{\ast}}[\phi]=\sup_{Q\in \mathcal{A}}E_{Q}[\phi]$ (resp.
$E_{P^{\ast}}[X]=\mathbb{\hat{E}}\left[  X\right]  $).
\end{remark}

It is important to note that $W_{k}$, $k=1,\ldots,N$, are not independent for
some $P\in \mathcal{P}$. The following result can be found in Lemma 17 in
\cite{HP21}.

\begin{proposition}
\label{pro1-2}Let $P\in \mathcal{P}$ and $X\in L_{c}^{1}(\mathcal{F}_{N})$ be
given. Then $E_{P}[X|\mathcal{F}_{k}]\leq \mathbb{\hat{E}}\left[
X|\mathcal{F}_{k}\right]  $ $P$-a.s. for $k=0,\ldots,N$.
\end{proposition}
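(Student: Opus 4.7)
The plan is to set $Y := X - \mathbb{\hat{E}}[X|\mathcal{F}_k]$, establish the quasi-sure identity $\mathbb{\hat{E}}[Y|\mathcal{F}_k] = 0$, and then combine this with the defining property $E_P[\cdot]\le \mathbb{\hat{E}}[\cdot]$ of $\mathcal{P}$ together with a density argument to obtain $E_P[Y\mathbf{1}_A]\le 0$ for every $A\in\mathcal{F}_k$, which is equivalent to the stated conditional inequality.

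For the identity $\mathbb{\hat{E}}[Y|\mathcal{F}_k]=0$, when $X=\phi(W_1,\ldots,W_N)\in Lip(\mathcal{F}_N)$ it is immediate from the iterative construction: $\mathbb{\hat{E}}[X|\mathcal{F}_k]=\phi_k(W_1,\ldots,W_k)$ is $\mathcal{F}_k$-measurable, so the constant-shift property of the sublinear conditional expectation gives
\[
\mathbb{\hat{E}}[X-\phi_k(W_1,\ldots,W_k)|\mathcal{F}_k]=\phi_k(W_1,\ldots,W_k)-\phi_k(W_1,\ldots,W_k)=0.
\]
For general $X\in L_c^1(\mathcal{F}_N)$, I would approximate $X$ in $\|\cdot\|_1$ by $X^m\in Lip(\mathcal{F}_N)$; by the $L_c^1$-contraction of $\mathbb{\hat{E}}[\cdot|\mathcal{F}_k]$ recorded just before the proposition, $Y^m:=X^m-\mathbb{\hat{E}}[X^m|\mathcal{F}_k]\to Y$ and $\mathbb{\hat{E}}[Y^m|\mathcal{F}_k]\to\mathbb{\hat{E}}[Y|\mathcal{F}_k]$ in $L_c^1$, so $\mathbb{\hat{E}}[Y|\mathcal{F}_k]=0$ q.s.

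Next, for any non-negative $g\in C_{b,Lip}(\mathbb{R}^{d\times k})$, setting $\eta:=g(W_1,\ldots,W_k)$ and pulling this non-negative $\mathcal{F}_k$-measurable factor out of the conditional sublinear expectation yields $\mathbb{\hat{E}}[\eta Y|\mathcal{F}_k]=\eta\mathbb{\hat{E}}[Y|\mathcal{F}_k]=0$, and the tower property then gives $\mathbb{\hat{E}}[\eta Y]=0$. Since $\eta Y\in L_c^1(\mathcal{F}_N)$ and $P\in\mathcal{P}$, this yields $E_P[\eta Y]\le\mathbb{\hat{E}}[\eta Y]=0$. To pass from bounded Lipschitz test functions to indicators, fix $A=\{(W_1,\ldots,W_k)\in B\}\in\mathcal{F}_k$ with $B\in\mathcal{B}(\mathbb{R}^{d\times k})$ and use density of $C_{b,Lip}(\mathbb{R}^{d\times k})$ in $L^1(P\circ(W_1,\ldots,W_k)^{-1})$ to select $g_m$ with $0\le g_m\le 1$ such that $g_m(W_1,\ldots,W_k)\to\mathbf{1}_A$ $P$-a.s.\ along a subsequence; dominated convergence with dominator $|Y|\in L^1(P)$ then gives $E_P[Y\mathbf{1}_A]=\lim_m E_P[Yg_m(W)]\le 0$, yielding the claim.

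The main conceptual step, and the one I expect to require the most care, is the reduction to $\mathbb{\hat{E}}[Y|\mathcal{F}_k]=0$: the defining property of $\mathcal{P}$ provides only an unconditional bound, and this identity is what allows one to convert unconditional into conditional domination by testing against non-negative $\mathcal{F}_k$-measurable factors. The other ingredients (pull-out, tower, Lipschitz density) are standard, but one does have to verify that the $L_c^1$-extension of $\mathbb{\hat{E}}[\cdot|\mathcal{F}_k]$ preserves the constant-shift and pull-out properties used above, which is done by norm-approximation on the dense subclass $Lip(\mathcal{F}_N)$.
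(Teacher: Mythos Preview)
Your argument is correct. The paper, however, does not supply its own proof of this proposition: it simply cites Lemma~17 in \cite{HP21} and states the result without further justification. So there is no in-paper proof to compare against; you have filled in what the paper outsources to a reference.

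A couple of minor remarks on your write-up. First, when you invoke $E_P[\eta Y]\le\mathbb{\hat{E}}[\eta Y]$, note that the defining inequality for $P\in\mathcal{P}$ is stated only for $\phi\in C_{b,Lip}$; the extension to all of $L_c^1(\mathcal{F}_N)$ follows because the paper defines $\mathbb{\hat{E}}[X]=\sup_{P'\in\mathcal{P}}E_{P'}[X]$ on $\mathbb{L}^1(\mathcal{F}_N)\supset L_c^1(\mathcal{F}_N)$, and this coincides with the $\|\cdot\|_1$-completion of $\mathbb{\hat{E}}$ on $L_c^1$. You use this implicitly, and it is worth making explicit. Second, in the density step it is cleaner to pick $g_m$ so that $g_m\to\mathbf{1}_B$ in $L^1(P\circ(W_1,\ldots,W_k)^{-1})$ and pass to an a.s.\ convergent subsequence, exactly as you indicate; the truncation to $[0,1]$ is harmless since $r\mapsto(r\vee 0)\wedge 1$ is $1$-Lipschitz. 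With these points clarified, your proof is complete and self-contained.
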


\section{Discrete-time stochastic optimal control problem}

We first give the integrable condition for $W$ and the definition of
admissible controls.

In the following of this paper, we always suppose that $|W_{k}|\in L_{c}%
^{2}(\mathcal{F}_{k})$ for $k=1,\ldots,N$. By (i) of Proposition \ref{pro1-1},
this condition is equivalent to
\[
\sup_{\theta \in \Theta}\int_{\{|x|\geq i\}}|x|^{2}F_{\theta}(dx)\downarrow
0\text{ as }i\rightarrow \infty.
\]

\begin{remark}
\label{re2-1}By Proposition \ref{pro1-2}, we know that for any $P\in
\mathcal{P}$,%
\[
E_{P}\left[  \prod_{k=1}^{N}|W_{k}|^{2}\right]  =E_{P}\left[  E_{P}%
[|W_{N}|^{2}|\mathcal{F}_{N-1}]\prod_{k=1}^{N-1}|W_{k}|^{2}\right]  \leq
E_{P}\left[  \mathbb{\hat{E}}[|W_{N}|^{2}]\prod_{k=1}^{N-1}|W_{k}|^{2}\right]
\leq \prod_{k=1}^{N}\mathbb{\hat{E}}[|W_{k}|^{2}].
\]
Then we can easily obtain $\Pi_{k=1}^{N}|W_{k}|\in L_{c}^{2}(\mathcal{F}_{N}%
)$, which is important to derive the integrable condition for the adjoint equation.
\end{remark}

\begin{definition}
Let $U_{k}\subset \mathbb{R}^{m}$, $k=0,\ldots,N-1$, be nonempty convex sets.
$u=\{u_{k}:$ $k=0$,$\ldots$,$N-1\}$ is said to be an admissible control if
$u_{k}:\Omega \rightarrow U_{k}$ and $|u_{k}|\in L_{c}^{2}(\mathcal{F}_{k})$
for $k=0$,$\ldots$,$N-1$. The set of all admissible controls is denoted by
$\mathcal{U}$.
\end{definition}

Consider the following discrete-time stochastic control system:%
\begin{equation}
\left \{
\begin{array}
[c]{rl}%
X_{k+1}= & \displaystyle b\left(  k,X_{k},u_{k}\right)  +\sum_{l=1}^{d}%
\sigma^{l}\left(  k,X_{k},u_{k}\right)  W_{k+1}^{l},\\
X_{0}= & x_{0}\in \mathbb{R}^{n},\text{ }k=0,\ldots,N-1,
\end{array}
\right.  \label{e2-1}%
\end{equation}
where $b\left(  k,\cdot,\cdot,\cdot \right)  $, $\sigma^{l}\left(
k,\cdot,\cdot,\cdot \right)  :\mathbb{R}^{n}\times U_{k}\times \Omega
\longrightarrow \mathbb{R}^{n}$ for $l=1$,$\ldots$,$d$, $k=0$,$\ldots$,$N-1$.
The cost functional is defined by%
\begin{equation}
J\left(  u\right)  =\mathbb{\hat{E}}\left[  \sum_{k=0}^{N-1}f\left(
k,X_{k},u_{k}\right)  +\varphi \left(  X_{N}\right)  \right]  \text{,}
\label{e2-2}%
\end{equation}
where $f\left(  k,\cdot,\cdot,\cdot \right)  :\mathbb{R}^{n}\times U_{k}%
\times \Omega \longrightarrow \mathbb{R}$ for $k=0$,$\ldots$,$N-1$, and
$\varphi \left(  \cdot,\cdot \right)  :\mathbb{R}^{n}\times \Omega \longrightarrow
\mathbb{R}$.

Our discrete-time stochastic optimal control problem is to minimize the cost
functional $J\left(  u\right)  $ over $\mathcal{U}$, i.e.,%
\begin{equation}
\left \{
\begin{array}
[c]{rl}%
\text{Minimize} & \text{ }J\left(  u\right) \\
\text{subject to} & \text{ }u\in \mathcal{U}.
\end{array}
\right.  \label{e2-3}%
\end{equation}

\section{Discrete-time stochastic maximum principle}

In this section, the constant $C$ will change from line to line in our proof.
In order to derive maximum principle for the discrete-time stochastic optimal
control problem (\ref{e2-3}), we need the following assumptions.

\begin{description}
\item[(H1)] $|b\left(  k,x,u\right)  |$, $|\sigma^{l}\left(  k,x,u\right)
|\in L_{c}^{2}(\mathcal{F}_{k})$, $f\left(  k,x,u\right)  \in L_{c}%
^{1}(\mathcal{F}_{k})$, $\varphi \left(  x\right)  \in L_{c}^{1}(\mathcal{F}%
_{N})$ for $x\in \mathbb{R}^{n}$, $u\in U_{k}$, $l=1$,$\ldots$,$d$,
$k=0$,$\ldots$,$N-1$.

\item[(H2)] There exists a constant $L>0$ such that%
\[
|b_{x}(k,x,u)|+|b_{u}(k,x,u)|+|\sigma_{x}^{l}(k,x,u)|+|\sigma_{u}%
^{l}(k,x,u)|\leq L,
\]%
\[
|f_{x}(k,x,u)|+|f_{u}(k,x,u)|+|\varphi_{x}\left(  x\right)  |\leq
L(1+|x|+|u|),
\]
for $x\in \mathbb{R}^{n}$, $u\in U_{k}$, $l=1$,$\ldots$,$d$, $k=0$,$\ldots
$,$N-1$.

\item[(H3)] For each fixed $\alpha>0$, there exists a modulus of continuity
$\omega_{\alpha}:$ $\left[  0,\infty \right)  \rightarrow \left[  0,\infty
\right)  $ such that for any $x$, $x^{\prime}\in \mathbb{R}^{n}$ with
$|x|\leq \alpha$ and $|x^{\prime}|\leq \alpha$, $u$, $u^{\prime}\in U_{k}$ with
$|u|\leq \alpha$ and $|u^{\prime}|\leq \alpha$,
\[
|\phi(k,x,u)-\phi(k,x^{\prime},u^{\prime})|+|\varphi_{x}(x)-\varphi
_{x}(x^{\prime})|\leq \omega_{\alpha}(|x-x^{\prime}|+|u-u^{\prime}|),
\]
where $\phi(k,\cdot,\cdot)$ is the derivative of $b(k,\cdot,\cdot)$,
$\sigma^{l}(k,\cdot,\cdot)$, $f(k,\cdot,\cdot)$ in $\left(  x,u\right)  $ for
$l=1$,$\ldots$,$d$, $k=0$,$\ldots$,$N-1$.
\end{description}

\begin{lemma}
\label{le3-1}Let assumptions (H1)-(H3) hold. Then for any $u\in \mathcal{U}$,
the equation (\ref{e2-1}) admits a unique solution $\{X_{k}:k\leq N\}$ such
that $|X_{k}|\in L_{c}^{2}(\mathcal{F}_{k})$ for $k\leq N$. Moreover, there
exists a constant $C>0$ depending on $L$, $d$, and $N$ such that, for any
$u\in \mathcal{U}$,%
\begin{equation}
\mathbb{\hat{E}}\left[  \sum_{k=0}^{N}\left \vert X_{k}\right \vert ^{2}\right]
\leq C\left \{  \left \vert x_{0}\right \vert ^{2}+\mathbb{\hat{E}}\left[
\sum_{k=0}^{N-1}\left(  \left \vert u_{k}\right \vert ^{2}+\left \vert b\left(
k,0,0\right)  \right \vert ^{2}+\sum_{l=1}^{d}\left \vert \sigma^{l}\left(
k,0,0\right)  \right \vert ^{2}\right)  \right]  \right \}  . \label{e2-4}%
\end{equation}

\end{lemma}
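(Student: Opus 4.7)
My plan is to build $\{X_k\}_{k=0}^N$ by the forward recursion (\ref{e2-1}) and verify $|X_k|\in L_c^2(\mathcal{F}_k)$ by induction on $k$, then obtain (\ref{e2-4}) by iterating a one-step $L^2$-bound. Uniqueness is immediate: given $u\in\mathcal{U}$, each $X_{k+1}$ is an explicit measurable function of $X_k$, $u_k$, and $W_{k+1}$, so the recursion leaves no freedom.

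For the induction, $|X_0|=|x_0|\in L_c^2(\mathcal{F}_0)$ is trivial. Assume $|X_k|\in L_c^2(\mathcal{F}_k)$. By (H2) and (H1),
\[
|b(k,X_k,u_k)|\le |b(k,0,0)|+L(|X_k|+|u_k|),
\]
with $|b(k,0,0)|,|\sigma^l(k,0,0)|\in L_c^2(\mathcal{F}_k)$ and an analogous bound for $|\sigma^l(k,X_k,u_k)|$. A routine approximation by $Lip(\mathcal{F}_k)$ representatives, exploiting the Lipschitz property of $b,\sigma^l$ in $(x,u)$ uniformly in $\omega$, yields $|b(k,X_k,u_k)|,|\sigma^l(k,X_k,u_k)|\in L_c^2(\mathcal{F}_k)$. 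The critical step is to place the product $\sigma^l(k,X_k,u_k)W^l_{k+1}$ in $L_c^2(\mathcal{F}_{k+1})$. I argue as in Remark \ref{re2-1}: approximating by $Lip$-representatives and using that, by the i.i.d.\ construction of $\mathbb{\hat{E}}$ in Section 2 combined with Proposition \ref{pro1-2}, for any $Y\in L_c^2(\mathcal{F}_k)$ and any $g$ depending only on $W_{k+1}$ with $g(W_{k+1})\in L_c^2(\mathcal{F}_{k+1})$,
\[
\mathbb{\hat{E}}\bigl[Y^2 g(W_{k+1})^2\bigr]=\sup_{P\in\mathcal{P}}E_P\bigl[Y^2 E_P[g(W_{k+1})^2\mid\mathcal{F}_k]\bigr]\le \mathbb{\hat{E}}[Y^2]\,\mathbb{\hat{E}}[g(W_{k+1})^2].
\]
This bound produces a Cauchy sequence of $Lip$-products whose limit is $\sigma^l(k,X_k,u_k)W^l_{k+1}$, so the product lies in $L_c^2(\mathcal{F}_{k+1})$ and thus $|X_{k+1}|\in L_c^2(\mathcal{F}_{k+1})$, closing the induction.

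For the a priori bound, from (\ref{e2-1}) and the linear-growth estimates, the elementary inequality $|\sum_i a_i|^2\le C\sum_i|a_i|^2$ gives
\[
|X_{k+1}|^2\le C\Bigl(|b(k,0,0)|^2+|X_k|^2+|u_k|^2+\sum_{l=1}^d\bigl(|\sigma^l(k,0,0)|^2+|X_k|^2+|u_k|^2\bigr)|W^l_{k+1}|^2\Bigr).
\]
Taking $\mathbb{\hat{E}}$ and invoking the product inequality above together with the standing hypothesis $|W^l_j|\in L_c^2(\mathcal{F}_j)$ yields
\[
\mathbb{\hat{E}}[|X_{k+1}|^2]\le C_1\mathbb{\hat{E}}[|X_k|^2]+C_2\Bigl(\mathbb{\hat{E}}[|u_k|^2]+\mathbb{\hat{E}}[|b(k,0,0)|^2]+\sum_{l=1}^d\mathbb{\hat{E}}[|\sigma^l(k,0,0)|^2]\Bigr),
\]
with $C_1,C_2$ depending on $L$, $d$, and $\max_{l,j}\mathbb{\hat{E}}[|W^l_j|^2]$. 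A discrete Gronwall-type iteration from $k=0$ to $N-1$ followed by summation in $k$ produces (\ref{e2-4}) with a constant depending only on $L$, $d$, $N$.

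The main obstacle I foresee is the product step: verifying $\sigma^l(k,X_k,u_k)W^l_{k+1}\in L_c^2(\mathcal{F}_{k+1})$ rigorously, which requires the $Lip$-approximation combined with Proposition \ref{pro1-2} (not a genuine independence under a single $P\in\mathcal{P}$, which generally fails). Once this product closure and the displayed moment inequality are in hand, the rest is a standard discrete Gronwall computation.
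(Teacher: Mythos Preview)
Your proposal is correct and follows essentially the same route as the paper: the paper also invokes the product inequality $\mathbb{\hat{E}}[|\sigma^l(k,X_k,u_k)|^2|W_{k+1}^l|^2]\le \mathbb{\hat{E}}[|\sigma^l(k,X_k,u_k)|^2]\,\mathbb{\hat{E}}[|W_{k+1}^l|^2]$ from Remark~\ref{re2-1} (which is exactly your Proposition~\ref{pro1-2} argument), combines it with the linear growth from (H2), and closes with the same one-step recursion and induction. Your sketch merely spells out the $L_c^2$-closure step in more detail than the paper, which dismisses it with ``it is easy to deduce''.
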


\begin{proof}
By Remark \ref{re2-1}, it is easy to deduce that the equation (\ref{e2-1}) has
a unique solution $\{X_{k}:k\leq N\}$ such that $|X_{k}|\in L_{c}%
^{2}(\mathcal{F}_{k})$ for $k\leq N$. We only need to prove (\ref{e2-4}).

By Remark \ref{re2-1} and (\ref{e2-1}), we have%
\begin{align*}
\mathbb{\hat{E}}\left[  \left \vert X_{k+1}\right \vert ^{2}\right]   &  \leq
C\left(  \mathbb{\hat{E}}\left[  \left \vert b\left(  k,X_{k},u_{k}\right)
\right \vert ^{2}\right]  +\sum_{l=1}^{d}\mathbb{\hat{E}}\left[  \left \vert
\sigma^{l}\left(  k,X_{k},u_{k}\right)  \right \vert ^{2}\right]
\mathbb{\hat{E}}\left[  \left \vert W_{k+1}^{l}\right \vert ^{2}\right]  \right)
\\
&  \leq C\left(  \mathbb{\hat{E}}\left[  \left \vert X_{k}\right \vert
^{2}\right]  +\mathbb{\hat{E}}\left[  \left \vert u_{k}\right \vert ^{2}\right]
+\mathbb{\hat{E}}\left[  \left \vert b\left(  k,0,0\right)  \right \vert
^{2}\right]  +\sum_{l=1}^{d}\mathbb{\hat{E}}\left[  \left \vert \sigma
^{l}\left(  k,0,0\right)  \right \vert ^{2}\right]  \right)
\end{align*}
for $k=0,\ldots,N-1$, where the constant $C>0$ depending on $L$ and $d$. Then
(\ref{e2-4}) can be obtained by induction.
\end{proof}

\subsection{Variational equation}

Let $u^{\ast}=\{u_{k}^{\ast}:k=0,\ldots,N-1\}$ be optimal and $\{X_{k}^{\ast
}:k=0,\ldots,N\}$ be the corresponding state process of (\ref{e2-1}). Take an
arbitrary $u=\{u_{k}:k=0,\ldots,N-1\} \in \mathcal{U}$. Since $U_{k}$,
$k=0$,$\ldots$,$N-1$, are convex sets, we have $u^{\varepsilon}=u^{\ast
}+\varepsilon \left(  u-u^{\ast}\right)  \in \mathcal{U}$ for $\varepsilon
\in(0,1]$. Similarly, denote $\{X_{k}^{\varepsilon}:k=0,\ldots,N\}$ be the
state process of (\ref{e2-1}) associated with $u^{\varepsilon}$.

The following discrete-time stochastic equation is the variational equation
for (\ref{e2-1}).
\begin{equation}
\left \{
\begin{array}
[c]{rl}%
\hat{X}_{k+1}= & \left[  b_{x}\left(  k\right)  \hat{X}_{k}+b_{u}\left(
k\right)  (u_{k}-u_{k}^{\ast})\right] \\
& \displaystyle+\sum_{l=1}^{d}\left[  \sigma_{x}^{l}\left(  k\right)  \hat
{X}_{k}+\sigma_{u}^{l}\left(  k\right)  (u_{k}-u_{k}^{\ast})\right]
W_{k+1}^{l},\\
\hat{X}_{0}= & 0,\text{ }k=0,\ldots,N-1,
\end{array}
\right.  \label{e2-5}%
\end{equation}
where $b(\cdot)=(b_{1}(\cdot),\ldots,b_{n}(\cdot))^{T}$, $x=(x_{1}%
,\ldots,x_{n})^{T}$,%
\[
b_{x}(\cdot)=\left[
\begin{array}
[c]{ccc}%
b_{1x_{1}}(\cdot) & \cdots & b_{1x_{n}}(\cdot)\\
\vdots &  & \vdots \\
b_{nx_{1}}(\cdot) & \cdots & b_{nx_{n}}(\cdot)
\end{array}
\right]  ,
\]
$b_{x}(k)=b_{x}\left(  k,X_{k}^{\ast},u_{k}^{\ast}\right)  $, similar for
$b_{u}\left(  k\right)  $, $\sigma_{x}^{l}\left(  k\right)  $ and $\sigma
_{u}^{l}\left(  k\right)  $.

Define%
\begin{equation}
\tilde{X}_{k}^{\varepsilon}=\varepsilon^{-1}\left(  X_{k}^{\varepsilon}%
-X_{k}^{\ast}\right)  -\hat{X}_{k}\text{ for }k=0,\ldots,N. \label{e2-6}%
\end{equation}

\begin{proposition}
\label{pro3-2}Let assumptions (H1)-(H3) hold. Then
\begin{equation}
\lim_{\varepsilon \rightarrow0}\sup_{k\leq N}\mathbb{\hat{E}}\left[  |\tilde
{X}_{k}^{\varepsilon}|^{2}\right]  =0. \label{e2-9}%
\end{equation}

\end{proposition}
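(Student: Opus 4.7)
The plan is to derive a recursive $L^2$-inequality for $\tilde{X}^\varepsilon$ with a vanishing inhomogeneity, and then iterate. Since $X_0^\varepsilon = X_0^* = x_0$ and $\hat{X}_0 = 0$ force $\tilde{X}_0^\varepsilon = 0$, if I can show
\[
\mathbb{\hat{E}}[|\tilde{X}_{k+1}^\varepsilon|^2] \le C\,\mathbb{\hat{E}}[|\tilde{X}_k^\varepsilon|^2] + \rho_k(\varepsilon)
\]
with $\rho_k(\varepsilon)\to 0$ for each fixed $k$, the conclusion follows by induction on $k\le N$.

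First, I Taylor-expand. Using $X_k^\varepsilon - X_k^* = \varepsilon(\tilde{X}_k^\varepsilon + \hat{X}_k)$ and $u_k^\varepsilon - u_k^* = \varepsilon(u_k - u_k^*)$,
\[
\varepsilon^{-1}[b(k,X_k^\varepsilon,u_k^\varepsilon) - b(k,X_k^*,u_k^*)] = B_x^\varepsilon(k)(\tilde{X}_k^\varepsilon + \hat{X}_k) + B_u^\varepsilon(k)(u_k - u_k^*),
\]
where $B_x^\varepsilon(k) = \int_0^1 b_x(k,\, X_k^* + \lambda\varepsilon(\tilde{X}_k^\varepsilon + \hat{X}_k),\, u_k^* + \lambda\varepsilon(u_k - u_k^*))\,d\lambda$, and analogously $B_u^\varepsilon$, $\Sigma_x^{l,\varepsilon}$, $\Sigma_u^{l,\varepsilon}$. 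Subtracting \eqref{e2-5} yields
\[
\tilde{X}_{k+1}^\varepsilon = B_x^\varepsilon(k)\tilde{X}_k^\varepsilon + R_k^0(\varepsilon) + \sum_{l=1}^d\left\{\Sigma_x^{l,\varepsilon}(k)\tilde{X}_k^\varepsilon + R_k^l(\varepsilon)\right\}W_{k+1}^l,
\]
where $R_k^0(\varepsilon) = [B_x^\varepsilon(k) - b_x(k)]\hat{X}_k + [B_u^\varepsilon(k) - b_u(k)](u_k - u_k^*)$ and similarly $R_k^l(\varepsilon)$. Taking $\mathbb{\hat{E}}[|\cdot|^2]$, using $|B_x^\varepsilon|,\,|\Sigma_x^{l,\varepsilon}|\le L$ from (H2) and the crucial estimate
\[
\mathbb{\hat{E}}[|\xi\, W_{k+1}^l|^2] \le \mathbb{\hat{E}}[|\xi|^2]\,\mathbb{\hat{E}}[|W_{k+1}^l|^2] \qquad\text{for } \xi\in\mathbb{L}^2(\mathcal{F}_k),
\]
which follows from Proposition \ref{pro1-2} and the i.i.d.\ structure of $\{W_k\}$ (the same mechanism as in Remark \ref{re2-1}), produces the desired recursion with $\rho_k(\varepsilon)$ equal to a finite sum of terms of the forms $\mathbb{\hat{E}}[|[B_x^\varepsilon(k) - b_x(k)]\hat{X}_k|^2]$, $\mathbb{\hat{E}}[|[B_u^\varepsilon(k) - b_u(k)](u_k - u_k^*)|^2]$ and the corresponding $\sigma^l$ analogues (the latter picking up an additional factor $\mathbb{\hat{E}}[|W_{k+1}^l|^2]$).

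The main obstacle is proving $\rho_k(\varepsilon)\to 0$, since the sublinear framework admits no direct dominated-convergence argument. I will handle the representative term $\mathbb{\hat{E}}[|[B_x^\varepsilon(k) - b_x(k)]\hat{X}_k|^2]$ by a two-level truncation. On $E_\alpha = \{|X_k^*|,|X_k^\varepsilon|,|u_k^*|,|u_k^\varepsilon|\le \alpha\}$, (H3) gives $|B_x^\varepsilon(k) - b_x(k)| \le \omega_\alpha(|X_k^\varepsilon - X_k^*| + \varepsilon|u_k - u_k^*|)$, while off $E_\alpha$ it is bounded by $2L$. Lemma \ref{le3-1} and $|u_k|,|u_k^*|\in L_c^2$ give uniform-in-$\varepsilon$ bounds on $\mathbb{\hat{E}}[|X_k^\varepsilon|^2]$ etc., so Markov yields $c(E_\alpha^c)\to 0$ uniformly in $\varepsilon$, and Proposition \ref{pro1-1}(ii) applied to $|\hat{X}_k|^2 \in L_c^1$ makes $\mathbb{\hat{E}}[|\hat{X}_k|^2 I_{E_\alpha^c}]$ vanish as $\alpha\to\infty$. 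On $E_\alpha$, a further split by $G_\delta^\varepsilon = \{|X_k^\varepsilon - X_k^*| + \varepsilon|u_k - u_k^*| > \delta\}$ bounds the contribution by
\[
\omega_\alpha(\delta)^2\,\mathbb{\hat{E}}[|\hat{X}_k|^2] + 4L^2\,\mathbb{\hat{E}}[|\hat{X}_k|^2 I_{G_\delta^\varepsilon}].
\]
A parallel Gronwall induction (via Lemma \ref{le3-1}) gives $\mathbb{\hat{E}}[|X_k^\varepsilon - X_k^*|^2] \le C\varepsilon^2$, so $c(G_\delta^\varepsilon)\to 0$, and Proposition \ref{pro1-1}(ii) again kills the second piece. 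Sending $\varepsilon\to 0$, then $\delta\to 0$, then $\alpha\to\infty$ in that order makes the whole term vanish. The control-coefficient and $\sigma^l$-coefficient terms are treated identically, with $|u_k - u_k^*|^2 \in L_c^1$ playing the role of $|\hat{X}_k|^2$ where appropriate.
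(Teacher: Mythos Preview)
Your proof is correct and follows essentially the same strategy as the paper: derive the recursion for $\tilde{X}^\varepsilon$, bound the inhomogeneous remainders via truncation combined with Proposition~\ref{pro1-1}(ii), and iterate. The one notable economy in the paper's argument is the choice of truncation sets: instead of your two-level scheme (first $E_\alpha$, then $G_\delta^\varepsilon$), the paper truncates $|\tilde{X}_k^\varepsilon+\hat{X}_k|\le\alpha$ directly, so that on the good set the perturbation $|X_k^\varepsilon-X_k^*|=\varepsilon|\tilde{X}_k^\varepsilon+\hat{X}_k|\le\varepsilon\alpha$ is automatically small and (H3) yields the bound $\omega_{2\alpha}(2\varepsilon\alpha)\to 0$ in one step; the required uniform-in-$\varepsilon$ $L^2$ bound on $\tilde{X}_k^\varepsilon+\hat{X}_k$ comes from the crude estimate $|C_\varepsilon(k)|+|D_\varepsilon^l(k)|\le C(|\hat X_k|+|u_k-u_k^*|)$ fed back into the Gronwall inequality.
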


\begin{proof}
Combining (\ref{e2-1}) and (\ref{e2-5}), we have%
\begin{equation}
\left \{
\begin{array}
[c]{rl}%
\tilde{X}_{k+1}^{\varepsilon}= & \displaystyle \left[  A_{\varepsilon}\left(
k\right)  \tilde{X}_{k}^{\varepsilon}+C_{\varepsilon}\left(  k\right)
\right]  +\sum_{l=1}^{d}\left[  B_{\varepsilon}^{l}\left(  k\right)  \tilde
{X}_{k}^{\varepsilon}+D_{\varepsilon}^{l}\left(  k\right)  \right]
W_{k+1}^{l},\\
\tilde{X}_{0}^{\varepsilon}= & 0,\text{ }k=0,\ldots,N-1,
\end{array}
\right.  \label{e2-7}%
\end{equation}
where%
\[%
\begin{array}
[c]{l}%
A_{\varepsilon}\left(  k\right)  =\int_{0}^{1}b_{x}(k,X_{k}^{\ast}%
+\lambda \varepsilon(\tilde{X}_{k}^{\varepsilon}+\hat{X}_{k}),u_{k}^{\ast
}+\lambda \varepsilon(u_{k}-u_{k}^{\ast}))d\lambda,\\
B_{\varepsilon}^{l}\left(  k\right)  =\int_{0}^{1}\sigma_{x}^{l}(k,X_{k}%
^{\ast}+\lambda \varepsilon(\tilde{X}_{k}^{\varepsilon}+\hat{X}_{k}%
),u_{k}^{\ast}+\lambda \varepsilon(u_{k}-u_{k}^{\ast}))d\lambda,\\
C_{\varepsilon}\left(  k\right)  =\left[  A_{\varepsilon}\left(  k\right)
-b_{x}\left(  k\right)  \right]  \hat{X}_{k}+\int_{0}^{1}[b_{u}(k,X_{k}^{\ast
}+\lambda \varepsilon(\tilde{X}_{k}^{\varepsilon}+\hat{X}_{k}),u_{k}^{\ast
}+\lambda \varepsilon(u_{k}-u_{k}^{\ast}))-b_{u}\left(  k\right)
]d\lambda(u_{k}-u_{k}^{\ast}),\\
D_{\varepsilon}^{l}\left(  k\right)  =\left[  B_{\varepsilon}^{l}\left(
k\right)  -\sigma_{x}^{l}\left(  k\right)  \right]  \hat{X}_{k}+\int_{0}%
^{1}[\sigma_{u}^{l}(k,X_{k}^{\ast}+\lambda \varepsilon(\tilde{X}_{k}%
^{\varepsilon}+\hat{X}_{k}),u_{k}^{\ast}+\lambda \varepsilon(u_{k}-u_{k}^{\ast
}))-\sigma_{u}^{l}\left(  k\right)  ]d\lambda(u_{k}-u_{k}^{\ast}).
\end{array}
\]
By (\ref{e2-4}) in Lemma \ref{le3-1}, we get%
\begin{equation}
\mathbb{\hat{E}}\left[  \sum_{k=0}^{N}|\tilde{X}_{k}^{\varepsilon}%
|^{2}\right]  \leq C\mathbb{\hat{E}}\left[  \sum_{k=0}^{N-1}\left(  \left \vert
C_{\varepsilon}\left(  k\right)  \right \vert ^{2}+\sum_{l=1}^{d}\left \vert
D_{\varepsilon}^{l}\left(  k\right)  \right \vert ^{2}\right)  \right]  ,
\label{e2-8}%
\end{equation}
where the constant $C>0$ depending on $L$, $d$, and $N$. In order to obtain
(\ref{e2-9}), by (\ref{e2-8}) we only need to prove%
\begin{equation}
\mathbb{\hat{E}}\left[  \left \vert C_{\varepsilon}\left(  k\right)
\right \vert ^{2}\right]  +\mathbb{\hat{E}}\left[  \left \vert D_{\varepsilon
}^{l}\left(  k\right)  \right \vert ^{2}\right]  \rightarrow0\text{ as
}\varepsilon \rightarrow0\text{ for }k\leq N-1\text{ and }l\leq d.
\label{e2-10}%
\end{equation}

For each fixed $\alpha>0$ and $k\leq N-1$, set%
\[
S_{1}^{k,\alpha}=\{|X_{k}^{\ast}|\leq \alpha \},\text{ }S_{2}^{k,\alpha
}=\{|\tilde{X}_{k}^{\varepsilon}+\hat{X}_{k}|\leq \alpha \},\text{ }%
S_{3}^{k,\alpha}=\{|u_{k}^{\ast}|\leq \alpha \},\text{ }S_{4}^{k,\alpha
}=\{|u_{k}-u_{k}^{\ast}|\leq \alpha \}.
\]
By (H2) and (H3), we get%
\begin{equation}%
\begin{array}
[c]{rl}%
\left \vert C_{\varepsilon}\left(  k\right)  \right \vert  & =\left \vert
C_{\varepsilon}\left(  k\right)  \right \vert \left(  I_{\cap_{i=1}^{4}%
S_{i}^{k,\alpha}}+I_{(\cap_{i=1}^{4}S_{i}^{k,\alpha})^{c}}\right) \\
& \leq \omega_{2\alpha}(2\varepsilon \alpha)(|\hat{X}_{k}|+|u_{k}-u_{k}^{\ast
}|)+2L(|\hat{X}_{k}|+|u_{k}-u_{k}^{\ast}|)I_{(\cap_{i=1}^{4}S_{i}^{k,\alpha
})^{c}}.
\end{array}
\label{e2-14}%
\end{equation}
Noting that $\omega_{2\alpha}(2\varepsilon \alpha)\rightarrow0$ as
$\varepsilon \rightarrow0$, then we obtain
\begin{equation}
\underset{\varepsilon \rightarrow0}{\lim \sup}\mathbb{\hat{E}}\left[  \left \vert
C_{\varepsilon}\left(  k\right)  \right \vert ^{2}\right]  \leq C\mathbb{\hat
{E}}\left[  (|\hat{X}_{k}|+|u_{k}-u_{k}^{\ast}|)^{2}I_{(\cap_{i=1}^{4}%
S_{i}^{k,\alpha})^{c}}\right]  , \label{e2-15}%
\end{equation}
where the constant $C>0$ depending on $L$.

Since%
\[
I_{(\cap_{i=1}^{4}S_{i}^{k,\alpha})^{c}}\leq \alpha^{-2}(|X_{k}^{\ast}%
|^{2}+|\tilde{X}_{k}^{\varepsilon}+\hat{X}_{k}|^{2}+|u_{k}^{\ast}|^{2}%
+|u_{k}-u_{k}^{\ast}|^{2}),
\]
we deduce%
\begin{equation}
c((\cap_{i=1}^{4}S_{i}^{k,\alpha})^{c})=\mathbb{\hat{E}}\left[  I_{(\cap
_{i=1}^{4}S_{i}^{k,\alpha})^{c}}\right]  \leq \alpha^{-2}\mathbb{\hat{E}%
}\left[  |X_{k}^{\ast}|^{2}+|\tilde{X}_{k}^{\varepsilon}+\hat{X}_{k}%
|^{2}+|u_{k}^{\ast}|^{2}+|u_{k}-u_{k}^{\ast}|^{2}\right]  . \label{e2-16}%
\end{equation}
It follows from (H2) and (\ref{e2-8}) that%
\begin{equation}
\mathbb{\hat{E}}\left[  \sum_{k=0}^{N}|\tilde{X}_{k}^{\varepsilon}%
|^{2}\right]  \leq C\mathbb{\hat{E}}\left[  \sum_{k=0}^{N-1}\left(  \left \vert
\hat{X}_{k}\right \vert ^{2}+\left \vert u_{k}-u_{k}^{\ast}\right \vert
^{2}\right)  \right]  , \label{e2-11}%
\end{equation}
where the constant $C>0$ depending on $L$, $d$, and $N$. By (\ref{e2-16}) and
(\ref{e2-11}), we have%
\begin{equation}
\lim_{\alpha \rightarrow \infty}c((\cap_{i=1}^{4}S_{i}^{k,\alpha})^{c})=0.
\label{e2-12}%
\end{equation}
Thus, by (\ref{e2-15}), (\ref{e2-12}) and (ii) in Proposition \ref{pro1-1}, we
obtain%
\[
\lim_{\varepsilon \rightarrow0}\mathbb{\hat{E}}\left[  \left \vert
C_{\varepsilon}\left(  k\right)  \right \vert ^{2}\right]  =0.
\]
The same analysis for $D_{\varepsilon}^{l}\left(  k\right)  $, we can get
(\ref{e2-10}).
\end{proof}

Now we consider the variation for cost functional (\ref{e2-2}).

Set%
\begin{equation}
\mathcal{P}^{\ast}=\left \{  P\in \mathcal{P}:E_{P}\left[  \sum_{k=0}%
^{N-1}f\left(  k,X_{k}^{\ast},u_{k}^{\ast}\right)  +\varphi \left(  X_{N}%
^{\ast}\right)  \right]  =\mathbb{\hat{E}}\left[  \sum_{k=0}^{N-1}f\left(
k,X_{k}^{\ast},u_{k}^{\ast}\right)  +\varphi \left(  X_{N}^{\ast}\right)
\right]  \right \}  , \label{e2-17}%
\end{equation}%
\begin{equation}
\mathcal{P}^{\varepsilon}=\left \{  P\in \mathcal{P}:E_{P}\left[  \sum
_{k=0}^{N-1}f\left(  k,X_{k}^{\varepsilon},u_{k}^{\varepsilon}\right)
+\varphi \left(  X_{N}^{\varepsilon}\right)  \right]  =\mathbb{\hat{E}}\left[
\sum_{k=0}^{N-1}f\left(  k,X_{k}^{\varepsilon},u_{k}^{\varepsilon}\right)
+\varphi \left(  X_{N}^{\varepsilon}\right)  \right]  \right \}  \label{e2-19}%
\end{equation}
and%
\begin{equation}
\Theta^{u}=\sum_{k=0}^{N-1}\left[  f_{x}(k)\hat{X}_{k}+f_{u}\left(  k\right)
(u_{k}-u_{k}^{\ast})\right]  +\varphi_{x}\left(  X_{N}^{\ast}\right)  \hat
{X}_{N}, \label{e2-18}%
\end{equation}
where $f_{x}(\cdot)=(f_{x_{1}}(\cdot),\ldots,f_{x_{n}}(\cdot))$,
$f_{x}(k)=f_{x}\left(  k,X_{k}^{\ast},u_{k}^{\ast}\right)  $, similar for
$f_{u}(\cdot)$, $\varphi_{x}\left(  \cdot \right)  $, $f_{u}\left(  k\right)  $
and $f(k)$.

\begin{theorem}
\label{th3-3}Let assumptions (H1)-(H3) hold. Then, for any $u\in \mathcal{U}$,
there exists a $P^{u}\in \mathcal{P}^{\ast}$ such that%
\begin{equation}
\underset{\varepsilon \rightarrow0}{\lim}\varepsilon^{-1}[J\left(
u^{\varepsilon}\right)  -J\left(  u^{\ast}\right)  ]=E_{P^{u}}\left[
\Theta^{u}\right]  =\underset{P\in \mathcal{P}^{\ast}}{\sup}E_{P}\left[
\Theta^{u}\right]  . \label{e2-20}%
\end{equation}

\end{theorem}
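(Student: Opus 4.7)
The plan is to write $J(u^\varepsilon)-J(u^*)$ as a difference of two upper expectations, squeeze it by a two-sided sandwich using optimizers in $\mathcal{P}^*$ and $\mathcal{P}^\varepsilon$, and then extract a weak cluster point of the $\mathcal{P}^\varepsilon$-optimizers to serve as $P^u$. Abbreviate $F^\varepsilon := \sum_{k=0}^{N-1} f(k,X_k^\varepsilon,u_k^\varepsilon) + \varphi(X_N^\varepsilon)$, with $F^0$ corresponding to $(X^*,u^*)$. The remark after Proposition \ref{pro1-1} ensures that $\mathcal{P}^*$ and every $\mathcal{P}^\varepsilon$ are nonempty. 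For any fixed $P\in\mathcal{P}^*$ and any selected $P^\varepsilon\in\mathcal{P}^\varepsilon$, the definitions of $J$, $\mathcal{P}^*$, $\mathcal{P}^\varepsilon$ give
\[
E_P[F^\varepsilon - F^0] \;\leq\; J(u^\varepsilon) - J(u^*) \;\leq\; E_{P^\varepsilon}[F^\varepsilon - F^0].
\]

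The analytic core is the $L_c^1(\mathcal{F}_N)$-convergence $\varepsilon^{-1}(F^\varepsilon - F^0) \longrightarrow \Theta^u$ as $\varepsilon\downarrow 0$. I would prove it by a first-order Taylor expansion of $f$ and $\varphi$ around $(X_k^*,u_k^*)$, mirroring the estimates of $C_\varepsilon(k)$ and $D_\varepsilon^l(k)$ in Proposition \ref{pro3-2}: split the difference into a linear part driven by $\tilde X_k^\varepsilon\to 0$ in $L_c^2$ and a modulus-of-continuity part controlled by (H3) together with the uniform-integrability fact (ii) of Proposition \ref{pro1-1}, and use Cauchy--Schwarz to convert $L_c^2$-convergence of the linear pieces into $L_c^1$-convergence of the nonlinear integrands. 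Given this, the left side of the sandwich yields $|E_P[\varepsilon^{-1}(F^\varepsilon-F^0)] - E_P[\Theta^u]| \leq \mathbb{\hat{E}}[|\varepsilon^{-1}(F^\varepsilon-F^0)-\Theta^u|] \to 0$ uniformly in $P\in\mathcal{P}$, so
\[
\liminf_{\varepsilon\downarrow 0} \varepsilon^{-1}[J(u^\varepsilon)-J(u^*)] \;\geq\; \sup_{P\in\mathcal{P}^*} E_P[\Theta^u].
\]

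For the upper half, weak compactness of $\mathcal{A}$ together with (\ref{e-1-2}) furnishes a subsequence $\varepsilon_n\downarrow 0$ and some $P^0\in\mathcal{P}$ with $P^{\varepsilon_n}\to P^0$ weakly. Writing $E_{P^{\varepsilon_n}}[\varepsilon_n^{-1}(F^{\varepsilon_n}-F^0)] = E_{P^{\varepsilon_n}}[\Theta^u] + o(1)$ and invoking Proposition \ref{pro1-1}(iii) yields $E_{P^{\varepsilon_n}}[\Theta^u] \to E_{P^0}[\Theta^u]$, provided $\Theta^u \in L_c^1(\mathcal{F}_N)$; this integrability is exactly where the hypothesis $|W_k|+|u_k|\in L_c^2$ combined with (H2) is used, via Cauchy--Schwarz on products such as $f_x(k)\hat X_k$ and $\varphi_x(X_N^*)\hat X_N$. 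To identify $P^0\in\mathcal{P}^*$, I pass to the limit in $J(u^{\varepsilon_n}) = E_{P^{\varepsilon_n}}[F^{\varepsilon_n}]$: the $L_c^1$-convergence of $F^\varepsilon$ gives $J(u^{\varepsilon_n})\to J(u^*)$ and $E_{P^{\varepsilon_n}}[F^{\varepsilon_n}-F^0]\to 0$, while Proposition \ref{pro1-1}(iii) gives $E_{P^{\varepsilon_n}}[F^0]\to E_{P^0}[F^0]$, whence $E_{P^0}[F^0]=J(u^*)$. Thus $P^0\in\mathcal{P}^*$, and the upper bound yields $\limsup \leq E_{P^0}[\Theta^u] \leq \sup_{\mathcal{P}^*}E_P[\Theta^u]$; combined with the lower bound, the limit exists, equals this supremum, and is attained at $P^u := P^0$. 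The main obstacle is the $L_c^1$-layer: Proposition \ref{pro1-1}(iii) is available only on $L_c^1$ rather than on $\mathbb{L}^1$, so both $\Theta^u$ and $\varepsilon^{-1}(F^\varepsilon-F^0)$ must genuinely be shown to lie in $L_c^1(\mathcal{F}_N)$, which depends crucially on $|W_k|,|u_k|\in L_c^2$ and on Remark \ref{re2-1} to control products under measures where $\{W_k\}$ are not independent.
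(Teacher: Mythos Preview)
Your proposal is correct and follows essentially the same approach as the paper's proof: the same sandwich between $E_P[I_\varepsilon]$ and $E_{P^\varepsilon}[I_\varepsilon]$, the same $L_c^1$-convergence $I_\varepsilon\to\Theta^u$ obtained by mimicking the estimates in Proposition~\ref{pro3-2}, and the same weak-compactness extraction of a cluster point shown to lie in $\mathcal{P}^*$. Two small points to tighten: for the $\limsup$ bound you must first choose $\{\varepsilon_n\}$ realizing the $\limsup$ before extracting a weakly convergent subsequence, and the weak convergence and Proposition~\ref{pro1-1}(iii) operate at the level of $Q=P\circ W^{-1}\in\mathcal{A}$ and $L_c^1(\mathcal{\tilde F}_N)$ via \eqref{e-1-4}, not on $\mathcal{P}$ directly.
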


\begin{proof}
Set%
\[
I_{\varepsilon}=\varepsilon^{-1}\left[  \sum_{k=0}^{N-1}(f\left(
k,X_{k}^{\varepsilon},u_{k}^{\varepsilon}\right)  -f\left(  k,X_{k}^{\ast
},u_{k}^{\ast}\right)  )+\varphi \left(  X_{N}^{\varepsilon}\right)
-\varphi \left(  X_{N}^{\ast}\right)  \right]  .
\]
Similar to (\ref{e2-7}), we have%
\[
I_{\varepsilon}-\Theta^{u}=\sum_{k=0}^{N-1}[L_{\varepsilon}\left(  k\right)
\tilde{X}_{k}^{\varepsilon}+G_{\varepsilon}\left(  k\right)  ]+H_{\varepsilon
},
\]
where $\tilde{X}_{k}^{\varepsilon}$ is defined in (\ref{e2-6}),%
\[%
\begin{array}
[c]{rl}%
L_{\varepsilon}\left(  k\right)  = & \int_{0}^{1}f_{x}(k,X_{k}^{\ast}%
+\lambda \varepsilon(\tilde{X}_{k}^{\varepsilon}+\hat{X}_{k}),u_{k}^{\ast
}+\lambda \varepsilon(u_{k}-u_{k}^{\ast}))d\lambda,\\
G_{\varepsilon}\left(  k\right)  = & \left[  L_{\varepsilon}\left(  k\right)
-f_{x}\left(  k\right)  \right]  \hat{X}_{k}+\int_{0}^{1}[f_{u}(k,X_{k}^{\ast
}+\lambda \varepsilon(\tilde{X}_{k}^{\varepsilon}+\hat{X}_{k}),u_{k}^{\ast
}+\lambda \varepsilon(u_{k}-u_{k}^{\ast}))-f_{u}\left(  k\right)
]d\lambda(u_{k}-u_{k}^{\ast}),\\
H_{\varepsilon}= & \int_{0}^{1}\varphi_{x}(X_{N}^{\ast}+\lambda \varepsilon
(\tilde{X}_{N}^{\varepsilon}+\hat{X}_{N}))d\lambda \tilde{X}_{N}^{\varepsilon
}+\int_{0}^{1}[\varphi_{x}(X_{N}^{\ast}+\lambda \varepsilon(\tilde{X}%
_{N}^{\varepsilon}+\hat{X}_{N}))-\varphi_{x}(X_{N}^{\ast})]d\lambda \hat{X}%
_{N}.
\end{array}
\]
It follows from (H2), (\ref{e2-9}) and H\"{o}lder's inequality that%
\[
\lim_{\varepsilon \rightarrow0}\mathbb{\hat{E}}\left[  \sum_{k=0}%
^{N-1}|L_{\varepsilon}\left(  k\right)  \tilde{X}_{k}^{\varepsilon
}|+\left \vert \int_{0}^{1}\varphi_{x}(X_{N}^{\ast}+\lambda \varepsilon
(\tilde{X}_{N}^{\varepsilon}+\hat{X}_{N}))d\lambda \tilde{X}_{N}^{\varepsilon
}\right \vert \right]  =0.
\]
Similar to the proof of (\ref{e2-15}) and%
\[
\lim_{\varepsilon \rightarrow0}\mathbb{\hat{E}}\left[  |\tilde{X}%
_{k}^{\varepsilon}|(|\hat{X}_{k}|+|u_{k}-u_{k}^{\ast}|)\right]  =0,
\]
we deduce
\[
\underset{\varepsilon \rightarrow0}{\lim \sup}\mathbb{\hat{E}}\left[  \left \vert
G_{\varepsilon}\left(  k\right)  \right \vert \right]  \leq C\mathbb{\hat{E}%
}\left[  (1+|X_{k}^{\ast}|+|\hat{X}_{k}|+|u_{k}^{\ast}|+|u_{k}|)(|\hat{X}%
_{k}|+|u_{k}-u_{k}^{\ast}|)I_{(\cap_{i=1}^{4}S_{i}^{k,\alpha})^{c}}\right]  ,
\]
where the constant $C>0$ depending on $L$, $S_{i}^{k,\alpha}$, $i\leq4$, are
defined in the proof in Proposition \ref{pro3-2}. By (\ref{e2-12}) and (ii) in
Proposition \ref{pro1-1}, we have%
\[
\lim_{\varepsilon \rightarrow0}\mathbb{\hat{E}}\left[  \left \vert
G_{\varepsilon}\left(  k\right)  \right \vert \right]  =0.
\]
Similarly, we can get%
\[
\lim_{\varepsilon \rightarrow0}\mathbb{\hat{E}}\left[  \left \vert \int_{0}%
^{1}[\varphi_{x}(X_{N}^{\ast}+\lambda \varepsilon(\tilde{X}_{N}^{\varepsilon
}+\hat{X}_{N}))-\varphi_{x}(X_{N}^{\ast})]d\lambda \hat{X}_{N}\right \vert
\right]  =0.
\]
Thus we obtain%
\begin{equation}
\lim_{\varepsilon \rightarrow0}\mathbb{\hat{E}}\left[  \left \vert
I_{\varepsilon}-\Theta^{u}\right \vert \right]  =0. \label{e2-22}%
\end{equation}

For each $P\in \mathcal{P}^{\ast}$, we have%
\[
\mathbb{\hat{E}}\left[  \sum_{k=0}^{N-1}f\left(  k,X_{k}^{\varepsilon}%
,u_{k}^{\varepsilon}\right)  +\varphi \left(  X_{N}^{\varepsilon}\right)
\right]  \geq E_{P}\left[  \sum_{k=0}^{N-1}f\left(  k,X_{k}^{\varepsilon
},u_{k}^{\varepsilon}\right)  +\varphi \left(  X_{N}^{\varepsilon}\right)
\right]  ,
\]
which implies%
\[
\varepsilon^{-1}[J\left(  u^{\varepsilon}\right)  -J\left(  u^{\ast}\right)
]\geq E_{P}\left[  I_{\varepsilon}\right]  .
\]
By (\ref{e2-22}), we know%
\[
|E_{P}\left[  I_{\varepsilon}\right]  -E_{P}\left[  \Theta^{u}\right]
|\leq \mathbb{\hat{E}}\left[  \left \vert I_{\varepsilon}-\Theta^{u}\right \vert
\right]  \rightarrow0\text{ as }\varepsilon \rightarrow0,
\]
which implies%
\[
\underset{\varepsilon \rightarrow0}{\lim \inf}\varepsilon^{-1}[J\left(
u^{\varepsilon}\right)  -J\left(  u^{\ast}\right)  ]\geq E_{P}\left[
\Theta^{u}\right]  \text{ for each }P\in \mathcal{P}^{\ast}.
\]
Thus%
\begin{equation}
\underset{\varepsilon \rightarrow0}{\lim \inf}\varepsilon^{-1}[J\left(
u^{\varepsilon}\right)  -J\left(  u^{\ast}\right)  ]\geq \underset
{P\in \mathcal{P}^{\ast}}{\sup}E_{P}\left[  \Theta^{u}\right]  . \label{e2-23}%
\end{equation}

Let $\{ \varepsilon_{i}:i\geq1\} \subset(0,1]$ satisfy $\varepsilon
_{i}\rightarrow0$ and
\[
\underset{\varepsilon \rightarrow0}{\lim \sup}\varepsilon^{-1}[J\left(
u^{\varepsilon}\right)  -J\left(  u^{\ast}\right)  ]=\lim_{i\rightarrow \infty
}\varepsilon_{i}^{-1}[J\left(  u^{\varepsilon_{i}}\right)  -J\left(  u^{\ast
}\right)  ].
\]
For each $i\geq1$, choose a $P^{i}\in \mathcal{P}^{\varepsilon_{i}}$. Since%
\[
\mathbb{\hat{E}}\left[  \sum_{k=0}^{N-1}f\left(  k,X_{k}^{\ast},u_{k}^{\ast
}\right)  +\varphi \left(  X_{N}^{\ast}\right)  \right]  \geq E_{P^{i}}\left[
\sum_{k=0}^{N-1}f\left(  k,X_{k}^{\ast},u_{k}^{\ast}\right)  +\varphi \left(
X_{N}^{\ast}\right)  \right]  ,
\]
we have%
\[
\varepsilon_{i}^{-1}[J\left(  u^{\varepsilon_{i}}\right)  -J\left(  u^{\ast
}\right)  ]\leq E_{P^{i}}\left[  I_{\varepsilon_{i}}\right]  .
\]
Due to (\ref{e2-22}), we know%
\[
|E_{P^{i}}\left[  I_{\varepsilon_{i}}\right]  -E_{P^{i}}\left[  \Theta
^{u}\right]  |\leq \mathbb{\hat{E}}\left[  \left \vert I_{\varepsilon_{i}%
}-\Theta^{u}\right \vert \right]  \rightarrow0\text{ as }i\rightarrow \infty.
\]
Thus we obtain%
\begin{equation}
\underset{\varepsilon \rightarrow0}{\lim \sup}\varepsilon^{-1}[J\left(
u^{\varepsilon}\right)  -J\left(  u^{\ast}\right)  ]\leq \underset
{i\rightarrow \infty}{\lim \inf}E_{P^{i}}\left[  \Theta^{u}\right]  .
\label{e2-24}%
\end{equation}

By (\ref{e-1-4}), there exist $\phi_{i}\in L_{c}^{1}(\mathcal{\tilde{F}}_{N}%
)$, $i\geq1$, and $\phi$, $\phi^{u}\in L_{c}^{1}(\mathcal{\tilde{F}}_{N})$
such that%
\begin{equation}
\sum_{k=0}^{N-1}f\left(  k,X_{k}^{\varepsilon_{i}},u_{k}^{\varepsilon_{i}%
}\right)  +\varphi \left(  X_{N}^{\varepsilon_{i}}\right)  =\phi_{i}(W)\text{,
}\sum_{k=0}^{N-1}f\left(  k,X_{k}^{\ast},u_{k}^{\ast}\right)  +\varphi \left(
X_{N}^{\ast}\right)  =\phi(W),\text{ }\Theta^{u}=\phi^{u}(W), \label{new-e2-1}%
\end{equation}
where $W=(W_{1},\ldots,W_{N})$. Set $Q^{i}=P^{i}\circ W^{-1}$ for $i\geq1$, we
have%
\begin{equation}
J\left(  u^{\varepsilon_{i}}\right)  =E_{Q^{i}}[\phi_{i}]\text{ and }E_{P^{i}%
}\left[  \Theta^{u}\right]  =E_{Q^{i}}[\phi^{u}]. \label{e2-26}%
\end{equation}
Since $\mathcal{A}$ is weakly compact, we can find a subsequence $\{Q^{i_{j}%
}:j\geq1\}$ of $\{Q^{i}:i\geq1\}$ such that $Q^{i_{j}}$ converge weakly to
$Q^{u}\in \mathcal{A}$. Let $P^{u}\in \mathcal{P}$ satisfy $Q^{u}=P^{u}\circ
W^{-1}$. Then, by (iii) of Proposition \ref{pro1-1}, we obtain%
\begin{equation}
\underset{i\rightarrow \infty}{\lim \inf}E_{P^{i}}\left[  \Theta^{u}\right]
\leq \lim_{j\rightarrow \infty}E_{Q^{i_{j}}}[\phi^{u}]=E_{Q^{u}}[\phi
^{u}]=E_{P^{u}}[\Theta^{u}]. \label{e2-27}%
\end{equation}
Now we prove $P^{u}\in \mathcal{P}^{\ast}$. Since%
\[
|J\left(  u^{\varepsilon_{i}}\right)  -J\left(  u^{\ast}\right)
|\leq \varepsilon_{i}\mathbb{\hat{E}}\left[  |I_{\varepsilon_{i}}|\right]
\text{ and }E_{Q^{i}}[|\phi_{i}-\phi|]=\varepsilon_{i}E_{P^{i}}%
[|I_{\varepsilon_{i}}|]\leq \varepsilon_{i}\mathbb{\hat{E}}\left[
|I_{\varepsilon_{i}}|\right]  ,
\]
we deduce by (\ref{e2-22}), (\ref{e2-26}) and (iii) of Proposition
\ref{pro1-1} that%
\[
J\left(  u^{\ast}\right)  =\lim_{i\rightarrow \infty}J\left(  u^{\varepsilon
_{i}}\right)  =\lim_{j\rightarrow \infty}E_{Q^{i_{j}}}[\phi]=E_{Q^{u}}%
[\phi]=E_{P^{u}}\left[  \sum_{k=0}^{N-1}f\left(  k,X_{k}^{\ast},u_{k}^{\ast
}\right)  +\varphi \left(  X_{N}^{\ast}\right)  \right]  ,
\]
which implies $P^{u}\in \mathcal{P}^{\ast}$. Thus, by (\ref{e2-23}),
(\ref{e2-24}) and (\ref{e2-27}), we obtain (\ref{e2-20}).
\end{proof}

\subsection{ Variational inequality.}

\begin{theorem}
\label{th3-4}Let assumptions (H1)-(H3) hold. Then, there exists a $P^{\ast}%
\in \mathcal{P}^{\ast}$ such that%
\[
\inf_{u\in \mathcal{U}}E_{P^{\ast}}[\Theta^{u}]\geq0.
\]

\end{theorem}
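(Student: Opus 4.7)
The plan is to combine the variational inequality from Theorem \ref{th3-3} with Sion's minimax theorem to interchange $\inf_u$ and $\sup_P$, then use weak compactness of $\mathcal{P}^\ast$ to realize the outer supremum at a concrete $P^\ast$. The first, easy step: since $u^\ast$ is optimal and $u^\varepsilon = u^\ast+\varepsilon(u-u^\ast) \in \mathcal{U}$ for each $u \in \mathcal{U}$, one has $\varepsilon^{-1}[J(u^\varepsilon)-J(u^\ast)] \ge 0$, and letting $\varepsilon \downarrow 0$ Theorem \ref{th3-3} gives $\sup_{P \in \mathcal{P}^\ast} E_P[\Theta^u] \ge 0$ for every $u$, whence
\[
\inf_{u \in \mathcal{U}} \sup_{P \in \mathcal{P}^\ast} E_P[\Theta^u] \ge 0.
\]

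The central step is to apply Sion's minimax theorem to $f(P,u) := E_P[\Theta^u]$ on $\mathcal{P}^\ast \times \mathcal{U}$ (after a sign flip). I would first check the structural hypotheses. The set $\mathcal{U}$ is convex (each $U_k$ being convex) and embeds naturally into the Banach space $\prod_{k=0}^{N-1} L_c^2(\mathcal{F}_k;\mathbb{R}^m)$; the variational equation (\ref{e2-5}) is linear in $u-u^\ast$, so $\Theta^u$ is affine in $u$. Using (H1), (H2), Lemma \ref{le3-1}, Remark \ref{re2-1}, and Cauchy--Schwarz one gets $\Theta^u \in L_c^1(\mathcal{F}_N)$ with $u \mapsto \Theta^u$ continuous from $L_c^2$ into $L_c^1$; hence $u \mapsto f(P,u)$ is affine and continuous, in particular quasi-convex and lower semicontinuous. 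On the $P$ side, $\mathcal{P}^\ast$ is convex (cut out by a linear equality); writing $\sum_k f(k,X_k^\ast,u_k^\ast) + \varphi(X_N^\ast) = \phi(W)$ with $\phi \in L_c^1(\tilde{\mathcal{F}}_N)$, Proposition \ref{pro1-1}(iii) shows $\{Q \in \mathcal{A} : E_Q[\phi] = \hat{\mathbb{E}}[\phi]\}$ is weakly closed in the weakly compact $\mathcal{A}$, so under the bijection $P \leftrightarrow P \circ W^{-1}$ the set $\mathcal{P}^\ast$ is weakly compact; on this topology $P \mapsto f(P,u)$ is linear (hence quasi-concave) and continuous by Proposition \ref{pro1-1}(iii). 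Sion's theorem then yields
\[
\sup_{P \in \mathcal{P}^\ast} \inf_{u \in \mathcal{U}} E_P[\Theta^u] \;=\; \inf_{u \in \mathcal{U}} \sup_{P \in \mathcal{P}^\ast} E_P[\Theta^u] \;\ge\; 0.
\]

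Finally, since $P \mapsto \inf_{u \in \mathcal{U}} E_P[\Theta^u]$ is an infimum of weakly continuous functions it is weakly upper semicontinuous on the weakly compact $\mathcal{P}^\ast$, so the outer supremum is attained at some $P^\ast \in \mathcal{P}^\ast$, yielding $\inf_{u \in \mathcal{U}} E_{P^\ast}[\Theta^u] \ge 0$. The main obstacle I anticipate is the careful verification of Sion's hypotheses in this sublinear-expectation setting---specifically, using Proposition \ref{pro1-1}(iii) to establish both the weak closedness (hence compactness) of $\mathcal{P}^\ast$ inside $\mathcal{A}$ and the continuity of $P \mapsto E_P[\Theta^u]$, all of which rests on the integrability $\Theta^u \in L_c^1(\mathcal{F}_N)$ and the linearity of the variational equation in $u$.
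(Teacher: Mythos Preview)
Your proposal is correct and follows essentially the same route as the paper: Theorem \ref{th3-3} gives $\inf_{u}\sup_{P}\ge 0$, Sion's minimax theorem interchanges the order (using affinity of $u\mapsto\Theta^{u}$ and weak compactness/convexity on the probability side), and weak compactness yields the maximizer $P^{\ast}$. One minor imprecision worth fixing: the map $P\mapsto P\circ W^{-1}$ is surjective onto $\mathcal{A}$ by (A3) but need not be injective, which is exactly why the paper runs the compactness, continuity and Sion arguments on $\mathcal{A}^{\ast}=\{Q\in\mathcal{A}:E_{Q}[\phi]=\sup_{Q'}E_{Q'}[\phi]\}$ and only lifts the limit $Q^{\ast}$ back to some $P^{\ast}\in\mathcal{P}^{\ast}$ at the very end; your upper-semicontinuity attainment argument is a clean alternative to the paper's $1/i$-approximation plus subsequence extraction.
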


\begin{proof}
Set
\begin{equation}
\mathcal{A}^{\ast}=\{P\circ W^{-1}:P\in \mathcal{P}^{\ast}\}. \label{e2-29}%
\end{equation}
It is easy to check that%
\begin{equation}
\mathcal{A}^{\ast}=\left \{  Q\in \mathcal{A}:E_{Q}[\phi]=\sup_{Q^{\prime}%
\in \mathcal{A}}E_{Q^{\prime}}[\phi]\right \}  , \label{e2-28}%
\end{equation}
where $\phi$, $\phi^{u}\in L_{c}^{1}(\mathcal{\tilde{F}}_{N})$ are defined in
(\ref{new-e2-1}). By (iii) of Proposition \ref{pro1-1}, we deduce that
$\mathcal{A}^{\ast}$ is weakly compact. It follows from Theorem (\ref{th3-3})
that%
\[
\inf_{u\in \mathcal{U}}\sup_{P\in \mathcal{P}^{\ast}}E_{P}[\Theta^{u}%
]=\inf_{u\in \mathcal{U}}\sup_{Q\in \mathcal{A}^{\ast}}E_{Q}[\phi^{u}]\geq0.
\]
It is easy to verify that
\[
\phi^{\lambda u+(1-\lambda)u^{\prime}}=\lambda \phi^{u}+(1-\lambda
)\phi^{u^{\prime}}\text{ for }\lambda \in \lbrack0,1]\text{, }u,\text{
}u^{\prime}\in \mathcal{U}.
\]
Then, by Sion's minimax theorem, we obtain%
\[
\sup_{Q\in \mathcal{A}^{\ast}}\inf_{u\in \mathcal{U}}E_{Q}[\phi^{u}]=\inf
_{u\in \mathcal{U}}\sup_{Q\in \mathcal{A}^{\ast}}E_{Q}[\phi^{u}]\geq0.
\]
For each $i\geq1$, there exists a $Q^{i}\in \mathcal{A}^{\ast}$ such that%
\[
\inf_{u\in \mathcal{U}}E_{Q^{i}}[\phi^{u}]\geq-\frac{1}{i}.
\]
Since $\mathcal{A}^{\ast}$ is weakly compact, we can find a subsequence
$\{Q^{i_{j}}:j\geq1\}$ of $\{Q^{i}:i\geq1\}$ such that $Q^{i_{j}}$ converge
weakly to $Q^{\ast}\in \mathcal{A}^{\ast}$. Then, for each $u\in \mathcal{U}$,
we have%
\[
E_{Q^{\ast}}[\phi^{u}]=\lim_{j\rightarrow \infty}E_{Q^{i_{j}}}[\phi^{u}]\geq0.
\]
By (\ref{e2-29}), we know that there exists a $P^{\ast}\in \mathcal{P}^{\ast}$
such that $Q^{\ast}=P^{\ast}\circ W^{-1}$. Thus we obtain%
\[
\inf_{u\in \mathcal{U}}E_{P^{\ast}}[\Theta^{u}]=\inf_{u\in \mathcal{U}%
}E_{Q^{\ast}}[\phi^{u}]\geq0.
\]

\end{proof}

\subsection{Maximum principle}

Consider the following adjoint equation under $P^{\ast}$:%
\begin{equation}
\left \{
\begin{array}
[c]{rl}%
P_{N-1}= & E_{P^{\ast}}[\varphi_{x}^{T}(X_{N}^{\ast})|\mathcal{F}%
_{N-1}],\text{ }Q_{N-1}=E_{P^{\ast}}[\varphi_{x}^{T}(X_{N}^{\ast})W_{N}%
^{T}|\mathcal{F}_{N-1}],\\
P_{k}= & \displaystyle E_{P^{\ast}}\left[  b_{x}^{T}(k+1)P_{k+1}+\sum
_{l=1}^{d}(\sigma_{x}^{l}(k+1))^{T}Q_{k+1}^{l}+f_{x}^{T}%
(k+1)\Big{|}\mathcal{F}_{k}\right]  ,\\
Q_{k}= & \displaystyle E_{P^{\ast}}\left[  \left(  b_{x}^{T}(k+1)P_{k+1}%
+\sum_{l=1}^{d}(\sigma_{x}^{l}(k+1))^{T}Q_{k+1}^{l}+f_{x}^{T}(k+1)\right)
W_{k+1}^{T}\Big{|}\mathcal{F}_{k}\right]  ,\\
k= & N-2,\ldots,0,
\end{array}
\right.  \label{e2-30}%
\end{equation}
where $P_{k}:\Omega \rightarrow \mathbb{R}^{n}$, $Q_{k}=[Q_{k}^{1},\ldots
,Q_{k}^{d}]:\Omega \rightarrow \mathbb{R}^{n\times d}$.

\begin{lemma}
\label{le3-5}Let assumptions (H1)-(H3) hold. Then the adjoint equation
(\ref{e2-30}) has a unique solution $\{(P_{k},Q_{k}):k=0$,$\ldots$,$N-1\}$
such that, for each $\xi \in L_{P^{\ast}}^{2}(\mathcal{F}_{k})$,
\begin{equation}
E_{P^{\ast}}\left[  |P_{k}||\xi|+|Q_{k}||\xi|\right]  <\infty \text{ for
}k=N-1,\ldots,0, \label{e2-31}%
\end{equation}
where $L_{P^{\ast}}^{2}(\mathcal{F}_{k})=\{ \xi \in \mathcal{F}_{k}:E_{P^{\ast}%
}[|\xi|^{2}]<\infty \}$.
\end{lemma}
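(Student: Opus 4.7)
My plan is to construct the pair $(P_{k},Q_{k})$ by backward induction on $k$ from $N-1$ down to $0$, and simultaneously to establish the slightly stronger statement that $|P_{k}|,|Q_{k}|\in L_{P^{\ast}}^{2}(\mathcal{F}_{k})$. By the Cauchy--Schwarz inequality this immediately implies (\ref{e2-31}), since for every $\xi\in L_{P^{\ast}}^{2}(\mathcal{F}_{k})$ we have $E_{P^{\ast}}[|P_{k}||\xi|+|Q_{k}||\xi|]\le (E_{P^{\ast}}[|P_{k}|^{2}]^{1/2}+E_{P^{\ast}}[|Q_{k}|^{2}]^{1/2})\,E_{P^{\ast}}[|\xi|^{2}]^{1/2}$. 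Uniqueness is automatic, since at each step $P_{k}$ and $Q_{k}$ are prescribed as $P^{\ast}$-conditional expectations, which are determined up to $P^{\ast}$-null sets.

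For the base case $k=N-1$, (H2) gives the linear-growth bound $|\varphi_{x}(X_{N}^{\ast})|\le L(1+|X_{N}^{\ast}|)$, and Lemma \ref{le3-1} yields $|X_{N}^{\ast}|\in L_{c}^{2}(\mathcal{F}_{N})\subseteq L_{P^{\ast}}^{2}(\mathcal{F}_{N})$, whence $\varphi_{x}(X_{N}^{\ast})\in L_{P^{\ast}}^{2}$. Combined with $|W_{N}|\in L_{c}^{2}(\mathcal{F}_{N})$ this makes $\varphi_{x}(X_{N}^{\ast})W_{N}\in L_{P^{\ast}}^{1}$, so $P_{N-1}$ and $Q_{N-1}$ are well defined. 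Jensen's inequality gives $E_{P^{\ast}}[|P_{N-1}|^{2}]\le E_{P^{\ast}}[|\varphi_{x}(X_{N}^{\ast})|^{2}]<\infty$. For $Q_{N-1}$ the key tool is the \emph{conditional} Cauchy--Schwarz inequality, applied componentwise:
\[
|Q_{N-1}|^{2}\le E_{P^{\ast}}\bigl[|\varphi_{x}(X_{N}^{\ast})|^{2}\bigm|\mathcal{F}_{N-1}\bigr]\cdot E_{P^{\ast}}\bigl[|W_{N}|^{2}\bigm|\mathcal{F}_{N-1}\bigr].
\]
By Proposition \ref{pro1-2} the second conditional factor is dominated by $\mathbb{\hat{E}}[|W_{N}|^{2}|\mathcal{F}_{N-1}]=\mathbb{\hat{E}}[|W_{N}|^{2}]$, a deterministic constant thanks to the identically distributed construction of $\mathbb{\hat{E}}$. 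Taking $E_{P^{\ast}}[\,\cdot\,]$ then yields $E_{P^{\ast}}[|Q_{N-1}|^{2}]\le \mathbb{\hat{E}}[|W_{N}|^{2}]\,E_{P^{\ast}}[|\varphi_{x}(X_{N}^{\ast})|^{2}]<\infty$.

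For the inductive step, suppose $P_{k+1},Q_{k+1}\in L_{P^{\ast}}^{2}(\mathcal{F}_{k+1})$ and set
\[
\Phi_{k+1}:=b_{x}^{T}(k+1)P_{k+1}+\sum_{l=1}^{d}(\sigma_{x}^{l}(k+1))^{T}Q_{k+1}^{l}+f_{x}^{T}(k+1).
\]
By (H2) the matrices $b_{x}(k+1)$ and $\sigma_{x}^{l}(k+1)$ are bounded by $L$, and $|f_{x}(k+1)|\le L(1+|X_{k+1}^{\ast}|+|u_{k+1}^{\ast}|)$; together with the inductive hypothesis, Lemma \ref{le3-1}, and $|u_{k+1}^{\ast}|\in L_{c}^{2}$, this gives $\Phi_{k+1}\in L_{P^{\ast}}^{2}$. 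Exactly the same two inequalities as in the base case, now applied with $\Phi_{k+1}$ in place of $\varphi_{x}^{T}(X_{N}^{\ast})$ and $W_{k+1}$ in place of $W_{N}$, produce $P_{k}\in L_{P^{\ast}}^{2}(\mathcal{F}_{k})$ together with $E_{P^{\ast}}[|Q_{k}|^{2}]\le \mathbb{\hat{E}}[|W_{k+1}|^{2}]\,E_{P^{\ast}}[|\Phi_{k+1}|^{2}]<\infty$, closing the induction.

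The main obstacle is the $L^{2}$-bound on $Q_{k}$. A direct unconditional Cauchy--Schwarz applied to the product $\Phi_{k+1}W_{k+1}$ would demand $E_{P^{\ast}}[|W_{k+1}|^{4}]<\infty$, which is not available under the standing hypothesis $|W_{k}|\in L_{c}^{2}(\mathcal{F}_{k})$; iterating such a crude bound would force the required integrability of the noise to grow with $N$---precisely the defect in the literature that the paper aims to remove. Placing Cauchy--Schwarz \emph{inside} the conditional expectation $E_{P^{\ast}}[\,\cdot\,|\mathcal{F}_{k}]$ isolates only the second conditional moment of $W_{k+1}$, which Proposition \ref{pro1-2} then replaces by the deterministic constant $\mathbb{\hat{E}}[|W_{k+1}|^{2}]$. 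This is where the "independence" of the noise encoded in the sublinear expectation framework is essential, and is what allows the square integrability assumptions on the noise and control to be $N$-independent.
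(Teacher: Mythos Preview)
Your argument is correct, and in fact you obtain a slightly stronger conclusion than the paper states: you show $P_{k},Q_{k}\in L_{P^{\ast}}^{2}(\mathcal{F}_{k})$, from which the pairing estimate (\ref{e2-31}) follows by Cauchy--Schwarz. Your key device---applying the Cauchy--Schwarz inequality \emph{inside} the conditional expectation and then invoking Proposition~\ref{pro1-2} to replace $E_{P^{\ast}}[|W_{k+1}|^{2}\mid\mathcal{F}_{k}]$ by the deterministic constant $\mathbb{\hat{E}}[|W_{k+1}|^{2}]$---is exactly what keeps the noise integrability requirement independent of $N$, and it propagates cleanly through the backward induction.

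The paper takes a different route. Rather than carrying an inductive $L^{2}$ bound, it first unfolds the recursion to obtain the closed-form representation
\[
P_{k}=E_{P^{\ast}}\Bigl[f_{x}^{T}(k{+}1)+\sum_{i=k+2}^{N}\Bigl(\prod_{j=k+1}^{i-1}A_{j}^{T}\Bigr)f_{x}^{T}(i)\Bigm|\mathcal{F}_{k}\Bigr],\qquad A_{j}=b_{x}(j)+\sum_{l}\sigma_{x}^{l}(j)W_{j+1}^{l},
\]
and similarly for $Q_{k}$; it then bounds $E_{P^{\ast}}[|P_{k}||\xi|]$ and $E_{P^{\ast}}[|Q_{k}||\xi|]$ directly by pairing with $\xi$, applying unconditional Cauchy--Schwarz, and controlling the product $|\xi|^{2}\prod_{j}(1+|W_{j}|)^{2}$ via the iterated version of Proposition~\ref{pro1-2} recorded in Remark~\ref{re2-1}. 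Both approaches ultimately rely on the same ingredient---the domination $E_{P^{\ast}}[\,\cdot\,|\mathcal{F}_{k}]\le\mathbb{\hat{E}}[\,\cdot\,|\mathcal{F}_{k}]$ together with the independence structure of $W$ under $\mathbb{\hat{E}}$---but deploy it differently. Your argument is shorter and yields the sharper $L^{2}$ statement; the paper's argument has the merit of producing the explicit formula (\ref{e2-32}), which makes the structure of the adjoint solution transparent and may be useful in its own right.
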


\begin{proof}
Set $A_{k}=b_{x}(k)+\sum_{l=1}^{d}\sigma_{x}^{l}(k)W_{k+1}^{l}$, we prove
that, for $k=N-1$,$\ldots$,$0$,
\begin{equation}
\left \{
\begin{array}
[c]{rl}%
P_{k}= & \displaystyle E_{P^{\ast}}\left[  f_{x}^{T}(k+1)+\sum_{i=k+2}%
^{N}\left(  \prod \limits_{j=k+1}^{i-1}A_{j}^{T}\right)  f_{x}^{T}%
(i)\Big{|}\mathcal{F}_{k}\right]  ,\\
Q_{k}= & \displaystyle E_{P^{\ast}}\left[  f_{x}^{T}(k+1)W_{k+1}^{T}%
+\sum_{i=k+2}^{N}\left(  \prod \limits_{j=k+1}^{i-1}A_{j}^{T}\right)  f_{x}%
^{T}(i)W_{k+1}^{T}\Big{|}\mathcal{F}_{k}\right]  ,
\end{array}
\right.  \label{e2-32}%
\end{equation}
where $f_{x}(N)=\varphi_{x}(X_{N}^{\ast})$ and $\sum_{i=N+1}^{N}[\cdot]=0$. By
assumption (H2), it is easy to verify that%
\[
E_{P^{\ast}}[|f_{x}^{T}(i)|^{2}]<\infty \text{ and }|W_{k+1}^{T}|\prod
\limits_{j=k+1}^{i-1}|A_{j}^{T}|\leq C\prod \limits_{j=k+1}^{i}(1+|W_{j}|),
\]
where the constant $C>0$ depending on $L$, $d$, and $N$. By Remark
\ref{re2-1}, we know%
\[
E_{P^{\ast}}\left[  |\xi|^{2}|W_{k+1}^{T}|^{2}\prod \limits_{j=k+1}^{i-1}%
|A_{j}^{T}|^{2}\right]  \leq CE_{P^{\ast}}[|\xi|^{2}]\prod \limits_{j=k+1}%
^{i}\mathbb{\hat{E}}[1+|W_{j}|^{2}]<\infty.
\]
Thus $E_{P^{\ast}}\left[  |P_{k}||\xi|+|Q_{k}||\xi|\right]  <\infty$ if
(\ref{e2-32}) holds.

For $k=N-1$, it is easy to verify that (\ref{e2-32}) holds. If (\ref{e2-32})
holds for $k\geq1$, then, by (\ref{e2-30}), it is easy to verify that
(\ref{e2-32}) holds for $k-1$. Thus (\ref{e2-32}) holds by induction.
\end{proof}

For $k=0,\ldots,N-1$, define the Hamiltonian function $H\left(  k,\cdot
,\cdot,\cdot,\cdot,\cdot \right)  :\mathbb{R}^{n}\times U_{k}\times
\mathbb{R}^{n}\times \mathbb{R}^{n\times d}\times \Omega \longrightarrow
\mathbb{R}$ as follows:%
\begin{equation}
H\left(  k,x,u,p,q\right)  =p^{T}b(k,x,u)+\sum_{l=1}^{d}(q^{l})^{T}\sigma
^{l}(k,x,u)+f(k,x,u), \label{e2-33}%
\end{equation}
where $q=[q^{1},\ldots,q^{d}]$. It is easy to check that%
\[
H_{u}\left(  k,x,u,p,q\right)  =p^{T}b_{u}(k,x,u)+\sum_{l=1}^{d}(q^{l}%
)^{T}\sigma_{u}^{l}(k,x,u)+f_{u}(k,x,u).
\]

Now we give the following stochastic maximum principle.

\begin{theorem}
\label{th3-6}Suppose assumptions (H1)-(H3) hold. Let $u^{\ast}=\{u_{k}^{\ast
}:k=0,\ldots,N-1\}$ be optimal and $\{X_{k}^{\ast}:k=0,\ldots,N\}$ be the
corresponding state process of (\ref{e2-1}). Then there exist a $P^{\ast}%
\in \mathcal{P}^{\ast}$ such that%
\begin{equation}
H_{u}\left(  k,X_{k}^{\ast},u_{k}^{\ast},P_{k},Q_{k}\right)  (u-u_{k}^{\ast
})\geq0,\text{ }P^{\ast}\text{-a.s.},\text{ }\forall u\in U_{k},\text{
}k=0,\ldots,N-1, \label{e2-34}%
\end{equation}
where $\{(P_{k},Q_{k}):k=0$,$\ldots$,$N-1\}$ is the solution of the adjoint
equation (\ref{e2-30}) under $P^{\ast}$.
\end{theorem}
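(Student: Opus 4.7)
The plan is to take the probability $P^{\ast}$ supplied by Theorem \ref{th3-4} and convert the integrated inequality $\inf_{u\in\mathcal{U}} E_{P^{\ast}}[\Theta^{u}] \geq 0$ into the pointwise Hamiltonian condition via duality with the adjoint equation. The heart of the argument is the identity
\[
E_{P^{\ast}}[\Theta^{u}] \;=\; \sum_{k=0}^{N-1} E_{P^{\ast}}\!\bigl[H_{u}(k,X_{k}^{\ast},u_{k}^{\ast},P_{k},Q_{k})(u_{k}-u_{k}^{\ast})\bigr].
\]
To prove this I would set $R_{k+1} = b_{x}^{T}(k+1)P_{k+1} + \sum_{l}(\sigma_{x}^{l}(k+1))^{T}Q_{k+1}^{l} + f_{x}^{T}(k+1)$ for $k \leq N-2$ and $R_{N} = \varphi_{x}^{T}(X_{N}^{\ast})$, so the adjoint equation (\ref{e2-30}) reads $P_{k} = E_{P^{\ast}}[R_{k+1}\mid\mathcal{F}_{k}]$ and $Q_{k} = E_{P^{\ast}}[R_{k+1}W_{k+1}^{T}\mid\mathcal{F}_{k}]$. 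Writing $\hat{X}_{k+1} = A_{k}\hat{X}_{k} + B_{k}(u_{k}-u_{k}^{\ast})$ with $A_{k} = b_{x}(k)+\sum_{l}\sigma_{x}^{l}(k)W_{k+1}^{l}$ and $B_{k} = b_{u}(k)+\sum_{l}\sigma_{u}^{l}(k)W_{k+1}^{l}$, conditioning on $\mathcal{F}_{k}$ inside $E_{P^{\ast}}[R_{k+1}^{T}\hat{X}_{k+1}]$ produces
\[
E_{P^{\ast}}[R_{k+1}^{T}\hat{X}_{k+1}] = E_{P^{\ast}}\!\Bigl[\bigl(P_{k}^{T}b_{x}(k)+\textstyle\sum_{l}(Q_{k}^{l})^{T}\sigma_{x}^{l}(k)\bigr)\hat{X}_{k}\Bigr] + E_{P^{\ast}}\!\Bigl[\bigl(P_{k}^{T}b_{u}(k)+\textstyle\sum_{l}(Q_{k}^{l})^{T}\sigma_{u}^{l}(k)\bigr)(u_{k}-u_{k}^{\ast})\Bigr].
\]
Substituting the recursion $R_{k+1}^{T} = P_{k+1}^{T}b_{x}(k+1) + \sum_{l}(Q_{k+1}^{l})^{T}\sigma_{x}^{l}(k+1) + f_{x}(k+1)$ and summing (telescoping, with $\hat{X}_{0}=0$) yields the claimed duality formula.

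Combining the duality with Theorem \ref{th3-4} gives $\sum_{k=0}^{N-1} E_{P^{\ast}}[H_{u}(k,X_{k}^{\ast},u_{k}^{\ast},P_{k},Q_{k})(u_{k}-u_{k}^{\ast})] \geq 0$ for every $u \in \mathcal{U}$. For the pointwise assertion, I fix $k_{0}$ and a deterministic $u \in U_{k_{0}}$ and test with perturbations $\tilde{u}_{k}=u_{k}^{\ast}$ for $k\neq k_{0}$ and $\tilde{u}_{k_{0}} = (1-\eta)u_{k_{0}}^{\ast} + \eta u$, where $\eta = \psi(W_{1},\ldots,W_{k_{0}})$ with $\psi \in C_{b,Lip}(\mathbb{R}^{d\times k_{0}};[0,1])$. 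Convexity of $U_{k_{0}}$ ensures $\tilde{u}_{k_{0}}$ is $U_{k_{0}}$-valued, and $\tilde{u}_{k_{0}} \in L_{c}^{2}(\mathcal{F}_{k_{0}})$ since it is a product of bounded Lipschitz factors with $L_{c}^{2}$ terms, so $\tilde{u}\in\mathcal{U}$. All other summands vanish, leaving $E_{P^{\ast}}[\eta\,H_{u}(k_{0},X_{k_{0}}^{\ast},u_{k_{0}}^{\ast},P_{k_{0}},Q_{k_{0}})(u-u_{k_{0}}^{\ast})] \geq 0$. Since the collection of such $\eta$'s is dense in $\{g\in L^{\infty}(P^{\ast}\!\!\upharpoonright_{\mathcal{F}_{k_{0}}}):0\leq g\leq 1\}$ in the $L^{1}(P^{\ast})$-sense (by standard approximation of Borel functions of $(W_{1},\ldots,W_{k_{0}})$ by bounded Lipschitz ones), the signed measure $H_{u}(k_{0},\ldots)(u-u_{k_{0}}^{\ast})\,dP^{\ast}$ restricted to $\mathcal{F}_{k_{0}}$ is nonnegative, which gives (\ref{e2-34}).

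The main technical obstacle is the integrability bookkeeping in Step 1 under $P^{\ast}$: unlike the continuous-time martingale setting, $\{W_{k}\}$ need not have zero $P^{\ast}$-conditional mean, so the interchange of $E_{P^{\ast}}[\cdot]$ and the conditional expectations defining $P_{k},Q_{k}$ must be justified directly. This is exactly where Lemma \ref{le3-5} is used: it guarantees $E_{P^{\ast}}[|P_{k}||\xi|+|Q_{k}||\xi|]<\infty$ for $\xi \in L_{P^{\ast}}^{2}(\mathcal{F}_{k})$, so choosing $\xi$ to be the bounded quantities $b_{u}(k)(u_{k}-u_{k}^{\ast})$, $\sigma_{u}^{l}(k)(u_{k}-u_{k}^{\ast})$, $b_{x}(k)\hat{X}_{k}$, $\sigma_{x}^{l}(k)\hat{X}_{k}$ (all in $L_{P^{\ast}}^{2}$ by (H2) and Lemma \ref{le3-1}) makes every conditional expectation well defined and the telescoping legitimate. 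The density step in the localization is comparatively routine, but worth flagging because the admissibility requirement $|\tilde u_{k_{0}}|\in L_{c}^{2}(\mathcal{F}_{k_{0}})$ forces us to work with Lipschitz test functions of $(W_{1},\ldots,W_{k_{0}})$ rather than arbitrary $\mathcal{F}_{k_{0}}$-indicators.
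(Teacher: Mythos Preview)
Your proof is correct and follows essentially the same route as the paper: invoke Theorem \ref{th3-4}, establish the duality $E_{P^{\ast}}[\Theta^{u}]=\sum_{k}E_{P^{\ast}}[H_{u}(k)(u_{k}-u_{k}^{\ast})]$ by telescoping the variational equation against the adjoint recursion (with Lemma \ref{le3-5} supplying the integrability), isolate a single time index, and then pass to the pointwise statement by a density argument. The only minor difference is in that final step: the paper invokes Lusin's theorem to extend from $L_{c}^{2}(\mathcal{F}_{k})$-valued controls to $L_{P^{\ast}}^{2}(\mathcal{F}_{k};U_{k})$-valued ones, whereas you localize via convex combinations $\tilde u_{k_{0}}=(1-\eta)u_{k_{0}}^{\ast}+\eta u$ weighted by bounded Lipschitz $[0,1]$-valued test functions---both are standard approximation devices and yield the same conclusion.
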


\begin{proof}
By Theorem \ref{th3-4}, there exists a $P^{\ast}\in \mathcal{P}^{\ast}$ such
that%
\[
E_{P^{\ast}}\left[  \sum_{k=0}^{N-1}f_{x}(k)\hat{X}_{k}+\varphi_{x}\left(
X_{N}^{\ast}\right)  \hat{X}_{N}+\sum_{k=0}^{N-1}f_{u}\left(  k\right)
(u_{k}-u_{k}^{\ast})\right]  \geq0\text{ for any }u\in \mathcal{U}.
\]
Let $\{(P_{k},Q_{k}):k=0$,$\ldots$,$N-1\}$ be the solution of the adjoint
equation (\ref{e2-30}) under $P^{\ast}$. By (\ref{e2-5}), (\ref{e2-30}) and
(\ref{e2-31}), it is easy to verify that%
\begin{align*}
E_{P^{\ast}}\left[  \varphi_{x}\left(  X_{N}^{\ast}\right)  \hat{X}%
_{N}|\mathcal{F}_{N-1}\right]   &  =\left(  P_{N-1}^{T}b_{u}(N-1)+\sum
_{l=1}^{d}(Q_{N-1}^{l})^{T}\sigma_{x}^{l}(N-1)\right)  (u_{N-1}-u_{N-1}^{\ast
})\\
&  \  \  \  \ +\left(  P_{N-1}^{T}b_{x}(N-1)+\sum_{l=1}^{d}(Q_{N-1}^{l}%
)^{T}\sigma_{x}^{l}(N-1)\right)  \hat{X}_{N-1}%
\end{align*}
and for $k=N-2,\ldots,0$,%
\begin{align*}
&  E_{P^{\ast}}\left[  \left(  P_{k+1}^{T}b_{x}(k+1)+\sum_{l=1}^{d}%
(Q_{k+1}^{l})^{T}\sigma_{x}^{l}(k+1)+f_{x}(k+1)\right)  \hat{X}_{k+1}%
\Big{|}\mathcal{F}_{k}\right] \\
&  =\left(  P_{k}^{T}b_{u}(k)+\sum_{l=1}^{d}(Q_{k}^{l})^{T}\sigma_{u}%
^{l}(k)\right)  (u_{k}-u_{k}^{\ast})+\left(  P_{k}^{T}b_{x}(k)+\sum_{l=1}%
^{d}(Q_{k}^{l})^{T}\sigma_{x}^{l}(k)\right)  \hat{X}_{k}.
\end{align*}
From this, we can easily deduce that, for any $u\in \mathcal{U}$,
\[
E_{P^{\ast}}[\Theta^{u}]=E_{P^{\ast}}\left[  \sum_{k=0}^{N-1}H_{u}\left(
k,X_{k}^{\ast},u_{k}^{\ast},P_{k},Q_{k}\right)  (u_{k}-u_{k}^{\ast})\right]
\geq0.
\]
For each given $k\leq N-1$, taking $u_{i}=u_{i}^{\ast}$ for $i\not =k$, we get
that, for any $u\in \mathcal{U}$,%
\[
E_{P^{\ast}}\left[  H_{u}\left(  k,X_{k}^{\ast},u_{k}^{\ast},P_{k}%
,Q_{k}\right)  (u_{k}-u_{k}^{\ast})\right]  \geq0\text{.}%
\]
By Lusin's theorem, $L_{c}^{2}(\mathcal{F}_{k})$ is dense in $L_{P^{\ast}}%
^{2}(\mathcal{F}_{k})$ under the norm $(E_{P^{\ast}}\left[  |\cdot
|^{2}\right]  )^{1/2}$. Thus, for each $k\leq N-1$ and $u_{k}\in L_{P^{\ast}%
}^{2}(\mathcal{F}_{k};U_{k})$, we have%
\[
E_{P^{\ast}}\left[  H_{u}\left(  k,X_{k}^{\ast},u_{k}^{\ast},P_{k}%
,Q_{k}\right)  (u_{k}-u_{k}^{\ast})\right]  \geq0,
\]
which implies the desired result.
\end{proof}

\begin{remark}
If $U_{k}=\mathbb{R}^{m}$, then, by (\ref{e2-34}), we get $H_{u}\left(
k,X_{k}^{\ast},u_{k}^{\ast},P_{k},Q_{k}\right)  =0$, $P^{\ast}$-a.s.
\end{remark}

\subsection{Sufficient condition}

We give the following sufficient condition for optimality.

\begin{theorem}
\label{th3-7}Suppose assumptions (H1)-(H3) hold. Let $u^{\ast}=\{u_{k}^{\ast
}:k=0,\ldots,N-1\} \in \mathcal{U}$ and $P^{\ast}\in \mathcal{P}^{\ast}$ satify
that%
\[
H_{u}\left(  k,X_{k}^{\ast},u_{k}^{\ast},P_{k},Q_{k}\right)  (u-u_{k}^{\ast
})\geq0,\text{ }P^{\ast}\text{-a.s.},\text{ }\forall u\in U_{k},\text{
}k=0,\ldots,N-1,
\]
where $\{X_{k}^{\ast}:k=0,\ldots,N\}$ is the state process of (\ref{e2-1})
corresponding to $u^{\ast}$ and $\{(P_{k},Q_{k}):k=0$,$\ldots$,$N-1\}$ is the
solution of the adjoint equation (\ref{e2-30}) under $P^{\ast}$. Assume that
$H\left(  k,\cdot,\cdot,p,q\right)  $ is convex with respect to $x$, $u$ for
each $k\leq N-1$ and $\varphi$ is convex with respect to $x$. Then $u^{\ast}$
is an optimal control.
\end{theorem}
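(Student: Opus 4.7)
The plan is to show $J(u) \geq J(u^*)$ for every $u \in \mathcal{U}$. Since $P^* \in \mathcal{P}^*$ attains the supremum in the definition of $\mathbb{\hat{E}}$ at $u^*$ while $E_{P^*}[\cdot] \leq \mathbb{\hat{E}}[\cdot]$ in general, one immediately has
\[
J(u) - J(u^*) \geq E_{P^*}\Bigl[\sum_{k=0}^{N-1}\bigl(f(k,X_k,u_k) - f(k,X_k^*,u_k^*)\bigr) + \varphi(X_N) - \varphi(X_N^*)\Bigr],
\]
so it is enough to show the right-hand side is non-negative.

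Set $\bar{X}_k := X_k - X_k^*$, $\Delta_k b := b(k,X_k,u_k) - b(k,X_k^*,u_k^*)$, and $\Delta_k \sigma^l$ analogously. Convexity of $\varphi$ gives $\varphi(X_N) - \varphi(X_N^*) \geq \varphi_x(X_N^*)\bar{X}_N$. The joint convexity of $H(k,\cdot,\cdot,P_k,Q_k)$ in $(x,u)$, combined with the definition (\ref{e2-33}), yields
\[
f(k,X_k,u_k) - f(k,X_k^*,u_k^*) \geq H_x(k)\bar{X}_k + H_u(k)(u_k - u_k^*) - P_k^T \Delta_k b - \sum_{l=1}^{d}(Q_k^l)^T \Delta_k \sigma^l,
\]
where $H_x(k) := P_k^T b_x(k) + \sum_l (Q_k^l)^T \sigma_x^l(k) + f_x(k)$.

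The key step is a telescoping identity driven by the adjoint equation. Rewriting (\ref{e2-30}) compactly as $P_k^T = E_{P^*}[H_x(k+1)|\mathcal{F}_k]$ and $(Q_k^l)^T = E_{P^*}[H_x(k+1)W_{k+1}^l|\mathcal{F}_k]$, with the convention $H_x(N) := \varphi_x(X_N^*)$, and using that $\bar{X}_{k+1} = \Delta_k b + \sum_l \Delta_k \sigma^l W_{k+1}^l$ has $\mathcal{F}_k$-measurable coefficients, a direct conditioning on $\mathcal{F}_k$ gives
\[
E_{P^*}\Bigl[P_k^T \Delta_k b + \sum_{l=1}^{d}(Q_k^l)^T \Delta_k \sigma^l\Bigr] = E_{P^*}\bigl[H_x(k+1)\bar{X}_{k+1}\bigr].
\]
Summing this identity over $k = 0,\ldots,N-1$ and using $\bar{X}_0 = 0$ makes the $H_x(k)\bar{X}_k$ sum and the $\varphi_x(X_N^*)\bar{X}_N$ terminal term collapse, leaving
\[
E_{P^*}\Bigl[\sum_{k=0}^{N-1}\bigl(f(k,X_k,u_k) - f(k,X_k^*,u_k^*)\bigr) + \varphi(X_N) - \varphi(X_N^*)\Bigr] \geq \sum_{k=0}^{N-1}E_{P^*}\bigl[H_u(k)(u_k - u_k^*)\bigr].
\]
Applying the hypothesised maximum principle pointwise with $u = u_k(\omega) \in U_k$ (via a standard separability argument that promotes the ``$\forall u \in U_k$'' statement to an $\mathcal{F}_k$-measurable selector) makes each term on the right non-negative, concluding the proof.

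The main obstacle will be justifying the integrability under $P^*$ of every product entering the telescoping, in particular of $P_{k+1}^T b_x(k+1)\bar{X}_{k+1}$, $(Q_{k+1}^l)^T \sigma_x^l(k+1)\bar{X}_{k+1}$ and $f_x(k+1)\bar{X}_{k+1}$. This is supplied by Lemma \ref{le3-5}, which yields $E_{P^*}[(|P_k| + |Q_k|)|\xi|] < \infty$ for every $\xi \in L_{P^*}^{2}(\mathcal{F}_k)$, together with Lemma \ref{le3-1} and hypothesis (H2), which place $\bar{X}_k$, $\Delta_k b$, and $\Delta_k \sigma^l$ in $L_c^2(\mathcal{F}_k) \subset L_{P^*}^{2}(\mathcal{F}_k)$.
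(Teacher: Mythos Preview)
Your proof is correct and follows essentially the same approach as the paper's: reduce $J(u)-J(u^*)$ to an $E_{P^*}$-expectation using $P^*\in\mathcal{P}^*$, invoke convexity of $\varphi$ and of $H(k,\cdot,\cdot,P_k,Q_k)$, and use the adjoint equation (\ref{e2-30}) to telescope the resulting sum down to $\sum_k E_{P^*}[H_u(k)(u_k-u_k^*)]\geq 0$. The only cosmetic difference is that the paper first writes $f(k,X_k,u_k)-f(k)=f_x(k)\xi_k+\tilde{\alpha}(k)$ and $\varphi(X_N)-\varphi(X_N^*)=\varphi_x(X_N^*)\xi_N+\tilde{\beta}$ as identities, performs the telescoping (their (\ref{e2-36})) as an equality, and only then applies convexity via (\ref{e2-37}); you apply convexity first and telescope afterwards. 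The integrability justification you give via Lemma~\ref{le3-5} and Lemma~\ref{le3-1} is exactly what is needed.
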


\begin{proof}
For any $u=\{u_{k}:k=0,\ldots,N-1\} \in \mathcal{U}$, let $\{X_{k}%
:k=0,\ldots,N\}$ be the state process of (\ref{e2-1}) corresponding to $u$.
Set $\xi_{k}=X_{k}-X_{k}^{\ast}$ for $k=0$,$\ldots$,$N$. Then%
\[
\left \{
\begin{array}
[c]{rl}%
\xi_{k+1}= & \displaystyle \left[  b_{x}\left(  k\right)  \xi_{k}%
+\alpha \left(  k\right)  \right]  +\sum_{l=1}^{d}\left[  \sigma_{x}^{l}\left(
k\right)  \xi_{k}+\beta^{l}\left(  k\right)  \right]  W_{k+1}^{l},\\
\xi_{0}= & 0,\text{ }k=0,\ldots,N-1,
\end{array}
\right.
\]
where%
\[
\alpha \left(  k\right)  =b(k,X_{k},u_{k})-b(k)-b_{x}\left(  k\right)  \xi
_{k},\text{ }\beta^{l}\left(  k\right)  =\sigma^{l}(k,X_{k},u_{k})-\sigma
^{l}(k)-\sigma_{x}^{l}\left(  k\right)  \xi_{k}.
\]
Since $J(u)\geq E_{P^{\ast}}\left[  \sum_{k=0}^{N-1}f(k,X_{k},u_{k}%
)+\varphi(X_{N})\right]  $, we have%
\begin{equation}
J(u)-J(u^{\ast})\geq E_{P^{\ast}}\left[  \sum_{k=0}^{N-1}(f_{x}(k)\xi
_{k}+\tilde{\alpha}(k))+\varphi_{x}(X_{N}^{\ast})\xi_{N}+\tilde{\beta}\right]
, \label{e2-35}%
\end{equation}
where%
\[
\tilde{\alpha}\left(  k\right)  =f(k,X_{k},u_{k})-f(k)-f_{x}\left(  k\right)
\xi_{k},\text{ }\tilde{\beta}=\varphi(X_{N})-\varphi(X_{N}^{\ast})-\varphi
_{x}(X_{N}^{\ast})\xi_{N}.
\]

Set $H(k)=H(k,X_{k}^{\ast},u_{k}^{\ast},P_{k},Q_{k})$ for $k=0$,$\ldots$%
,$N-1$, similar for $H_{x}(k)$ and $H_{u}(k)$. It is easy to check that%
\[
H_{x}(k)=P_{k}^{T}b_{x}(k)+\sum_{l=1}^{d}(Q_{k}^{l})^{T}\sigma_{x}%
^{l}(k)+f_{x}(k)\text{ for }k\leq N-1.
\]
Similar to the proof of Theorem \ref{th3-6}, we have%
\begin{align*}
E_{P^{\ast}}\left[  \varphi_{x}\left(  X_{N}^{\ast}\right)  \xi_{N}%
|\mathcal{F}_{N-1}\right]   &  =\left(  P_{N-1}^{T}b_{x}(N-1)+\sum_{l=1}%
^{d}(Q_{N-1}^{l})^{T}\sigma_{x}^{l}(N-1)\right)  \xi_{N-1}\\
&  \  \  \  \ +\left(  P_{N-1}^{T}\alpha(N-1)+\sum_{l=1}^{d}(Q_{N-1}^{l}%
)^{T}\beta^{l}(N-1)\right)
\end{align*}
and for $k=N-2,\ldots,0$,%
\[
E_{P^{\ast}}\left[  H_{x}(k+1)\xi_{k+1}|\mathcal{F}_{k}\right]  =\left(
P_{k}^{T}b_{x}(k)+\sum_{l=1}^{d}(Q_{k}^{l})^{T}\sigma_{x}^{l}(k)\right)
\xi_{k}+\left(  P_{k}^{T}\alpha(k)+\sum_{l=1}^{d}(Q_{k}^{l})^{T}\beta
^{l}(k)\right)  .
\]
From this, we can easily deduce that
\begin{equation}
E_{P^{\ast}}\left[  \sum_{k=0}^{N-1}f_{x}(k)\xi_{k}+\varphi_{x}(X_{N}^{\ast
})\xi_{N}\right]  =E_{P^{\ast}}\left[  \sum_{k=0}^{N-1}\left(  P_{k}^{T}%
\alpha(k)+\sum_{l=1}^{d}(Q_{k}^{l})^{T}\beta^{l}(k)\right)  \right]  .
\label{e2-36}%
\end{equation}
It is easy to verify that, for $k=0$,$\ldots$,$N-1$,%
\begin{equation}%
\begin{array}
[c]{l}%
H(k,X_{k},u_{k},P_{k},Q_{k})-H(k)-H_{x}(k)\xi_{k}\\
=\displaystyle P_{k}^{T}\alpha(k)+\sum_{l=1}^{d}(Q_{k}^{l})^{T}\beta
^{l}(k)+\tilde{\alpha}(k).
\end{array}
\label{e2-37}%
\end{equation}
Noting that $E_{P^{\ast}}\left[  H_{u}\left(  k\right)  (u_{k}-u_{k}^{\ast
})\right]  \geq0$ for $k\leq N-1$, $H\left(  k,\cdot,\cdot,p,q\right)  $ and
$\varphi(\cdot)$ are convex, we obtain by (\ref{e2-35}), (\ref{e2-36}) and
(\ref{e2-37}) that%
\begin{align*}
J(u)-J(u^{\ast})  &  \geq E_{P^{\ast}}\left[  \sum_{k=0}^{N-1}(H(k,X_{k}%
,u_{k},P_{k},Q_{k})-H(k)-H_{x}(k)\xi_{k})+\tilde{\beta}\right] \\
&  \geq E_{P^{\ast}}\left[  \sum_{k=0}^{N-1}(H(k,X_{k},u_{k},P_{k}%
,Q_{k})-H(k)-H_{x}(k)\xi_{k}-H_{u}\left(  k\right)  (u_{k}-u_{k}^{\ast
}))+\tilde{\beta}\right] \\
&  \geq0.
\end{align*}
Thus $u^{\ast}$ is an optimal control.
\end{proof}

\section{Backward algorithm and examples}

The key point to apply maximum principle (\ref{e2-34}) is to know $P^{\ast}%
\in \mathcal{P}^{\ast}$. Due to Proposition \ref{pro1-2}, we know%
\[
E_{P^{\ast}}\left[  \sum_{k=0}^{N-1}f\left(  k,X_{k}^{\ast},u_{k}^{\ast
}\right)  +\varphi \left(  X_{N}^{\ast}\right)  \Big{|}\mathcal{F}_{k}\right]
\leq \mathbb{\hat{E}}\left[  \sum_{k=0}^{N-1}f\left(  k,X_{k}^{\ast}%
,u_{k}^{\ast}\right)  +\varphi \left(  X_{N}^{\ast}\right)  \Big{|}\mathcal{F}%
_{k}\right]  ,\text{ }P^{\ast}\text{-a.s.}%
\]
Since $P^{\ast}\in \mathcal{P}^{\ast}$, we obtain%
\begin{equation}
E_{P^{\ast}}\left[  \sum_{k=0}^{N-1}f\left(  k,X_{k}^{\ast},u_{k}^{\ast
}\right)  +\varphi \left(  X_{N}^{\ast}\right)  \Big{|}\mathcal{F}_{k}\right]
=\mathbb{\hat{E}}\left[  \sum_{k=0}^{N-1}f\left(  k,X_{k}^{\ast},u_{k}^{\ast
}\right)  +\varphi \left(  X_{N}^{\ast}\right)  \Big{|}\mathcal{F}_{k}\right]
,\text{ }P^{\ast}\text{-a.s.,} \label{e3-1}%
\end{equation}
which is important to calculate $P^{\ast}\in \mathcal{P}^{\ast}$. Now, we give
the following backward algorithm to find an optimal control $u^{\ast}$:

\noindent Step 1. Calculate $P^{\ast}(\cdot|\mathcal{F}_{N-1})$ by
(\ref{e3-1}) with $k=N-1$, i.e.,%
\begin{equation}
E_{P^{\ast}}\left[  \varphi \left(  X_{N}^{\ast}\right)  |\mathcal{F}%
_{N-1}\right]  =\mathbb{\hat{E}}\left[  \varphi \left(  X_{N}^{\ast}\right)
|\mathcal{F}_{N-1}\right]  \text{ }P^{\ast}\text{-a.s.} \label{e3-2}%
\end{equation}
Specifically, under the condition $(W_{1}$,$\ldots$,$W_{N-1}$,$X_{N-1}^{\ast}%
$,$u_{N-1}^{\ast})=x$, we can find a $\theta_{x}\in \bar{\Theta}$ such that
\[
\mathbb{\hat{E}}\left[  \varphi \left(  X_{N}^{\ast}\right)  |(W_{1}%
,\ldots,W_{N-1},X_{N-1}^{\ast},u_{N-1}^{\ast})=x\right]
\]
is calculated under the distribution $F_{\theta_{x}}$ in the definition
$\mathbb{\hat{E}}\left[  \varphi \left(  X_{N}^{\ast}\right)  |\mathcal{F}%
_{N-1}\right]  $. Here $\{F_{\theta}:\theta \in \bar{\Theta}\}$ is the closure
of $\{F_{\theta}:\theta \in \Theta \}$ under the topology of weak convergence. By
(\ref{e3-2}), we obtain%
\[
P^{\ast}(\{W_{N}\in A\}|(W_{1},\ldots,W_{N-1},X_{N-1}^{\ast},u_{N-1}^{\ast
})=x)=F_{\theta_{x}}(A)\text{ for }A\in \mathcal{B}(\mathbb{R}^{d}).
\]
In particular, if $\varphi(\cdot)$, $b(N-1,\cdot)$ and $\sigma^{l}(N-1,\cdot)$
are deterministic functions, then we only need to calculate $\mathbb{\hat{E}%
}\left[  \varphi \left(  X_{N}^{\ast}\right)  |(X_{N-1}^{\ast},u_{N-1}^{\ast
})=x\right]  $.

\noindent Step 2. Calculate $P_{N-1}$ and $Q_{N-1}$ according to the adjoint
equation (\ref{e2-30}).

\noindent Step 3. Calculate $u_{N-1}^{\ast}$ by maximum principle (\ref{e2-34}).

\noindent Step 4. By repeating the above three steps, we can get $u^{\ast
}=\{u_{k}^{\ast}:k=0,\ldots,N-1\}$.

\begin{example}
Consider the case $N=4$, $d=2$, $n=m=1$ and $U_{k}=\mathbb{R}$ for $k\leq3$.
The control system is%
\[
\left \{
\begin{array}
[c]{rl}%
X_{k+1}= & (X_{k}+u_{k})+\sigma^{1}\left(  k,X_{k},u_{k}\right)  W_{k+1}%
^{1}+\sigma^{2}\left(  k,X_{k},u_{k}\right)  W_{k+1}^{2},\\
X_{0}= & 1,\text{ }k=0,\ldots,3,
\end{array}
\right.
\]
where $\sigma^{1}\left(  0,X_{0},u_{0}\right)  =\sigma^{2}\left(
0,X_{0},u_{0}\right)  =u_{0}$, $\sigma^{1}\left(  1,X_{1},u_{1}\right)
=2u_{1}$, $\sigma^{2}\left(  1,X_{1},u_{1}\right)  =u_{1}$, $\sigma^{1}\left(
2,X_{2},u_{2}\right)  =u_{2}$, $\sigma^{2}\left(  2,X_{2},u_{2}\right)
=2u_{2}$, $\sigma^{1}\left(  3,X_{3},u_{3}\right)  =\sigma^{2}\left(
3,X_{3},u_{3}\right)  =u_{3}$.

For each $\theta>0$ and $\tilde{\theta}>0$, $F_{\theta,\tilde{\theta}}$ is a
probability measure on $(\mathbb{R}^{2},\mathcal{B}(\mathbb{R}^{2}))$ defined
as%
\[
F_{\theta,\tilde{\theta}}(A)=\int_{A}\frac{1}{2\pi \theta \tilde{\theta}}%
\exp \left(  -\frac{x^{2}}{2\theta^{2}}-\frac{y^{2}}{2\tilde{\theta}^{2}%
}\right)  dxdy\text{ for }A\in \mathcal{B}(\mathbb{R}^{2}).
\]
Define%
\[
\mathbb{\hat{E}}\left[  \phi \left(  W_{k}\right)  \right]  =\max \left \{
\int_{\mathbb{R}^{2}}\phi dF_{\bar{\sigma},\underline{\sigma}},\int
_{\mathbb{R}^{2}}\phi dF_{\underline{\sigma},\bar{\sigma}}\right \}  \text{ for
}\phi \in C_{b.Lip}(\mathbb{R}^{2}),\text{ }k=1,\ldots,4,
\]
where $\underline{\sigma}=1$, $\bar{\sigma}=\sqrt{2}$. The cost functional is
defined by $J(u)=\mathbb{\hat{E}}\left[  |X_{4}|^{2}\right]  $. According to
the backward algorithm, we calculate as follows:

\noindent Step 1. Since $|X_{4}^{\ast}|^{2}=|X_{3}^{\ast}+u_{3}^{\ast}%
+u_{3}^{\ast}(W_{4}^{1}+W_{4}^{2})|^{2}$, we have%
\[
\mathbb{\hat{E}}\left[  |X_{4}^{\ast}|^{2}|\mathcal{F}_{3}\right]
=|X_{3}^{\ast}+u_{3}^{\ast}|^{2}+3|u_{3}^{\ast}|^{2}.
\]
Thus $P^{\ast}(\{W_{4}\in A\}|\mathcal{F}_{3})=F_{\bar{\sigma},\underline
{\sigma}}(A)$ or $F_{\underline{\sigma},\bar{\sigma}}(A)$.

\noindent Step 2. $P_{3}=E_{P^{\ast}}[2X_{4}^{\ast}|\mathcal{F}_{3}%
]=2(X_{3}^{\ast}+u_{3}^{\ast})$, $Q_{3}^{1}+Q_{3}^{2}=E_{P^{\ast}}%
[2X_{4}^{\ast}(W_{4}^{1}+W_{4}^{2})|\mathcal{F}_{3}]=6u_{3}^{\ast}$.

\noindent Step 3. By maximum principle (\ref{e2-34}), we have $P_{3}+Q_{3}%
^{1}+Q_{3}^{2}=0$, which yields $u_{3}^{\ast}=-X_{3}^{\ast}/4$. Thus%
\[
J(u^{\ast})=\mathbb{\hat{E}}\left[  |X_{4}^{\ast}|^{2}\right]  =\mathbb{\hat
{E}}\left[  \mathbb{\hat{E}}\left[  |X_{4}^{\ast}|^{2}|\mathcal{F}_{3}\right]
\right]  =\mathbb{\hat{E}}\left[  \left \vert X_{3}^{\ast}-\frac{1}{4}%
X_{3}^{\ast}\right \vert ^{2}+3\left \vert -\frac{1}{4}X_{3}^{\ast}\right \vert
^{2}\right]  =\frac{3}{4}\mathbb{\hat{E}}\left[  |X_{3}^{\ast}|^{2}\right]  .
\]

\noindent Step 4. Repeating the above three steps for $k=2$, $1$, $0$, we
obtain%
\[%
\begin{array}
[c]{rlrl}%
P^{\ast}(\{W_{4}\in A\}|\mathcal{F}_{3})= & F_{\bar{\sigma},\underline{\sigma
}}(A)\text{ or }F_{\underline{\sigma},\bar{\sigma}}(A),\text{ } & u_{3}^{\ast
}= & -\frac{1}{4}X_{3}^{\ast},\\
P^{\ast}(\{W_{3}\in A\}|\mathcal{F}_{2})= & F_{\underline{\sigma},\bar{\sigma
}}(A), & u_{2}^{\ast}= & -\frac{1}{10}X_{2}^{\ast},\\
P^{\ast}(\{W_{2}\in A\}|\mathcal{F}_{1})= & F_{\bar{\sigma},\underline{\sigma
}}(A), & u_{1}^{\ast}= & -\frac{1}{10}X_{1}^{\ast},\\
P^{\ast}(\{W_{1}\in A\})= & F_{\bar{\sigma},\underline{\sigma}}(A)\text{ or
}F_{\underline{\sigma},\bar{\sigma}}(A), & u_{0}^{\ast}= & -\frac{1}{4}.
\end{array}
\]
By Theorem \ref{th3-7}, we know that $u^{\ast}$ is an optimal control. The
optimal value $J(u^{\ast})=729/1600$.
\end{example}

\begin{example}
In the above example, we only change the uncertainty of the distribution of
$W_{k}$ as follows%
\[
\mathbb{\hat{E}}\left[  \phi \left(  W_{k}\right)  \right]  =\sup \left \{
\int_{\mathbb{R}^{2}}\phi dF_{\theta,\tilde{\theta}}:\theta,\tilde{\theta}%
\in \lbrack \underline{\sigma},\bar{\sigma}]\right \}  \text{ for }\phi \in
C_{b.Lip}(\mathbb{R}^{2}),\text{ }k=1,\ldots,4,
\]
the others are the same. According to the backward algorithm, we obtain%
\[%
\begin{array}
[c]{rlrl}%
P^{\ast}(\{W_{4}\in A\}|\mathcal{F}_{3})= & F_{\bar{\sigma},\bar{\sigma}%
}(A),\text{ } & u_{3}^{\ast}= & -\frac{1}{5}X_{3}^{\ast},\\
P^{\ast}(\{W_{3}\in A\}|\mathcal{F}_{2})= & F_{\bar{\sigma},\bar{\sigma}%
}(A), & u_{2}^{\ast}= & -\frac{1}{11}X_{2}^{\ast},\\
P^{\ast}(\{W_{2}\in A\}|\mathcal{F}_{1})= & F_{\bar{\sigma},\bar{\sigma}%
}(A), & u_{1}^{\ast}= & -\frac{1}{11}X_{1}^{\ast},\\
P^{\ast}(\{W_{1}\in A\})= & F_{\bar{\sigma},\bar{\sigma}}(A), & u_{0}^{\ast
}= & -\frac{1}{5}.
\end{array}
\]
The optimal value $J(u^{\ast})=64/121$.
\end{example}

\begin{example}
Consider the case $N=4$, $d=n=m=1$ and $U_{k}=\mathbb{R}$ for $k\leq3$. The
control system is%
\[
\left \{
\begin{array}
[c]{l}%
X_{1}=1+u_{0}W_{1},\\
X_{2}=\sin(\frac{\pi}{2}W_{1})X_{1}+u_{1}+\left(  X_{1}+u_{1}\right)  W_{2},\\
X_{3}=X_{2}+u_{2}W_{3},\\
X_{4}=u_{3}+X_{3}W_{4}.
\end{array}
\right.
\]
For each $\theta \in \lbrack \frac{1}{3},\frac{2}{3}]$, $F_{\theta}$ is a
probability measure on $(\mathbb{R},\mathcal{B}(\mathbb{R}))$ defined as%
\[
F_{\theta}(\{1\})=F_{\theta}(\{-1\})=\frac{\theta}{2},\text{ }F_{\theta
}(\{0\})=1-\theta.
\]
Define%
\[
\mathbb{\hat{E}}\left[  \phi \left(  W_{k}\right)  \right]  =\sup_{\theta
\in \lbrack \frac{1}{3},\frac{2}{3}]}\int_{\mathbb{R}}\phi dF_{\theta}\text{ for
}\phi \in C_{b.Lip}(\mathbb{R}),\text{ }k=1,\ldots,4,
\]
where $\int_{\mathbb{R}}\phi dF_{\theta}=\frac{\theta}{2}[\phi(1)+\phi
(-1)]+(1-\theta)\phi(0)$. It is easy to check that, for $k=1$,$\ldots$,$4$,%
\[
\mathbb{\hat{E}}\left[  W_{k}\right]  =\mathbb{\hat{E}}\left[  -W_{k}\right]
=0,\text{ \ }\mathbb{\hat{E}}\left[  W_{k}^{2}\right]  =\frac{2}{3},\text{
\ }\mathbb{\hat{E}}\left[  -W_{k}^{2}\right]  =-\frac{1}{3}.
\]
The cost functional is defined by $J(u)=\mathbb{\hat{E}}\left[  |X_{4}%
|^{2}\right]  $. According to the backward algorithm, we obtain%
\[%
\begin{array}
[c]{rlrl}%
P^{\ast}(\{W_{4}\in A\}|\mathcal{F}_{3})= & F_{2/3}(A),\text{ } & u_{3}^{\ast
}= & 0,\\
P^{\ast}(\{W_{3}\in A\}|\mathcal{F}_{2})= & F_{2/3}(A), & u_{2}^{\ast}= & 0,\\
P^{\ast}(\{W_{2}\in A\}|\mathcal{F}_{1})= & F_{2/3}(A), & u_{1}^{\ast}= &
-\frac{3}{5}\sin(\frac{\pi}{2}W_{1})X_{1}^{\ast}-\frac{2}{5}X_{1}^{\ast},\\
P^{\ast}(\{W_{1}\in A\})= & F_{1/3}(A), & u_{0}^{\ast}= & 1.
\end{array}
\]
The optimal value $J(u^{\ast})=8/45$.
\end{example}

\end{document}